\newif\ifpdf
\theoremstyle{plain}
\newtheorem{theorem}{Theorem}[section]
\newtheorem{lemma}[theorem]{Lemma}
\newtheorem{proposition}[theorem]{Proposition}
\theoremstyle{definition}
\newtheorem{definition}[theorem]{Definition}
\theoremstyle{remark}
\numberwithin{equation}{section}
\newcommand{\projM}{\rho_M}
\begin{document}

\ifpdf
\DeclareGraphicsExtensions{.pdf, .jpg, .tif}
\else
\DeclareGraphicsExtensions{.eps, .jpg}
\fi

\title{Erratic boundary images of CAT(0) geodesics under $G$-equivariant maps}
\author{D. Staley}
\address {Department of Mathematics\\
    Rutgers University,
    Piscataway, NJ 08854-8019}
\email{staley@math.rutgers.edu}

\date{\today}

\begin{abstract}
We show that, given any finite dimensional, connected, compact metric space $Z$, there exists a group $G$ acting geometrically on two CAT(0) spaces $X$ and $Y$, a $G$-equivariant quasi-isometry $f\colon X\rightarrow Y$, and a geodesic ray $c$ in $X$ such that the closure of $f(c)$, intersected with $\partial Y$, is homeomorphic to $Z$.  This characterizes all homeomorphism types of ``geodesic boundary images'' that arise in this manner.
\end{abstract}

\maketitle

\large

\section{Introduction}

CAT(0) spaces are geodesic metric spaces which satisfy a certain notion of nonpositive curvature.  For an exact definition and overview of CAT(0) spaces the reader is referred to \cite{BH99}.  A CAT(0) space $X$ has a natural boundary $\partial X$, whose points consist of geodesic rays in $X$ emanating from a chosen basepoint $p$.  This definition of boundary is independent of choice of $p$.  There is a natural topology on $X \cup \partial X$ which is called the {\em cone topology}.    Unfortunately, the boundary of a CAT(0) space is not a quasi-isometry invariant:  In~\cite{CK00} Croke and Kleiner construct examples of a group $G$ which acts properly and cocompactly by isometries on different spaces whose boundaries are not homeomorphic.

In \cite{Be96} Bestvina adapted the proof of Chapman's complement theorem to show that if a group acts on spaces with different boundaries, then the boundaries are shape equivalent.  He then asked whether all such boundaries are CE-equivalent.  Much progress has been made in classifying boundary types of various groups (for example see \cite{BR96} and \cite{CMpp2}), but Bestvina's question remains open.

Given a CAT(0) space $X$ and a subset $Q \subset X$, we can view $Q$ as a subset of $X \cup \partial X$ under the cone topology.  Letting $\overline{Q}$ denote the closure of $Q$, we call $\overline{Q} \cap \partial X$ the {\em boundary limit} of~$Q$.

A group action on a metric space is called {\em geometric} if the action is proper, cocompact, and by isometries.  A fundamental theorem of geometric group theory states that if a group $G$ acts geometrically on a proper metric space $X$, then $X$ is quasi-isometric to $G$ with the word metric.  In particular, if $G$ acts geometrically on two CAT(0) spaces $X$ and $Y$, then there is a $G$-equivariant quasi-isometry from $X$ to $Y$.

One may take the image of a geodesic ray in $X$ under this quasi-isometry, and examine its boundary limit in $Y$.  It is not hard to see that such boundary limits must be compact, connected, and finite dimensional.  The main result of this chapter is that these are the only restrictions we can put on the homeomorphism types of such boundary limits:

\begin{theorem}\label{T:Main}
Let $Z$ be any connected, compact subset of Euclidean space.  Then there is a group $G$ which acts geometrically on two CAT(0) spaces $X$ and $Y$, a $G$-equivariant quasi-isometry $f:X\rightarrow Y$, and a geodesic $c$ in $X$, such that the boundary limit of $f(c)$ is homeomorphic to $Z$.
\end{theorem}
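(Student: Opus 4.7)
The proof will be constructive, in the spirit of Croke-Kleiner: a single group $G$ can act geometrically on CAT(0) spaces with different large-scale flat structure, and a $G$-equivariant quasi-isometry between them can carry a geodesic ray in one space to a path that wobbles widely in the other. The plan is to engineer this wobble to trace out any prescribed compact connected $Z$.

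Begin by embedding $Z$ into some $\mathbb R^{N}$ and arranging that $Y$ contains a flat $F\cong\mathbb R^{N+1}$, so that $\partial F\cong S^{N}\supset Z$. A convenient choice for $G$ is a group (a right-angled Coxeter or Artin group, or an amalgam modelled on a graph of Euclidean pieces) whose Davis or Salvetti complex admits at least two distinct CAT(0) structures $X$ and $Y$ obtained by varying twist or shear parameters along a prescribed family of edge stabilizers. Because $X/G$ and $Y/G$ are compact, the orbit map produces a canonical $G$-equivariant quasi-isometry $f\colon X\to Y$; the twisting ensures $f$ is not bounded distance from any isometry, so that directions of geodesics can be radically altered.

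Next choose a countable dense sequence $\{z_n\}\subset Z$ and build $c$ by induction. Assuming $c$ has been defined on $[0,t_n]$ with endpoint $x_n$, choose a long segment $\sigma_n$ in $X$ starting at $x_n$ whose image under $f$ enters $F$ and travels deeply in the direction of $z_n$ (to within cone-topology distance $1/n$ of $z_n$), then ends in a controlled transition region near a group translate of $x_n$. Concatenate the $\sigma_n$ and pass to a limit: the CAT(0) geometry of $X$ together with an Arzel\`a--Ascoli compactness argument guarantees the limit exists as a genuine unit-speed geodesic ray $c$.

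The main obstacle is showing that $\overline{f(c)}\cap\partial Y$ is exactly $Z$, not some larger set. Density of $\{z_n\}$ yields $Z\subseteq\overline{f(c)}\cap\partial Y$ immediately. The reverse inclusion requires delicate control of the transition segments between consecutive excursions: one must show that during transitions, $f(c)$ either remains bounded (contributing nothing to $\partial Y$) or stays within the visual cone on $Z\subset\partial F$. This is where the hypothesis that $Z$ is connected becomes essential, since it allows consecutive $z_n, z_{n+1}$ to be taken arbitrarily close, so that the transition in $Y$ between excursion $n$ and excursion $n+1$ is a short, bounded detour whose cone-topology limit lies in $Z$ itself. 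Carrying out this control, and simultaneously maintaining the geodesic property in $X$, is where the bulk of the technical work will reside.
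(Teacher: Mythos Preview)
Your outline has the right instincts but contains two genuine gaps that the paper's argument avoids.

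First, the step ``concatenate the $\sigma_n$ and pass to a limit: \ldots\ Arzel\`a--Ascoli \ldots\ guarantees the limit exists as a genuine unit-speed geodesic ray'' does not work as written. A concatenation of geodesic segments is a broken geodesic, not a geodesic, and Arzel\`a--Ascoli applied to a sequence of non-geodesics will not produce a geodesic in the limit. If instead you mean to take the geodesic $[x_0,x_n]$ for each $n$ and extract a limiting ray, you lose all control over where that ray goes: there is no reason it should pass near the carefully chosen waypoints, so the whole inductive scheme collapses. The paper sidesteps this entirely by working with $G=F_n\times\mathbb Z^n$ acting on $X=T\times\mathbb R^n$ with $T$ a regular tree; a geodesic ray in the tree factor is automatically a geodesic in $X$, and at every vertex of $T$ one may freely choose which edge to follow next. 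Thus the ``inductive choice of direction'' is built into the combinatorics of $T$ and no limiting argument is needed.

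Second, your control of the reverse inclusion $\overline{f(c)}\cap\partial Y\subseteq Z$ is only a hope, not a mechanism. Saying that consecutive $z_n,z_{n+1}$ can be taken close does not by itself bound where $f$ sends the transition pieces; you need a concrete description of what $f$ does to each segment. In the paper this is completely explicit: $f$ is affine on each flat $g\cdot E_i$, sending $e_0\mapsto e_0+e_i$, so the image of a tree geodesic is literally a piecewise-linear ``walk'' over the vectors $v_i=e_0+e_i$ in a Euclidean half-space $c\times\mathbb R^n$. The problem then reduces to a purely Euclidean one (Theorem~\ref{T:TimpW}): given linearly independent $v_1,\dots,v_n$, realize any compact connected $Z$ inside the spherical simplex $L=\mathrm{conv}(\rho_M(v_i))$ as the boundary limit of some walk. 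This is where the real work lies, and it is done not via a dense sequence $\{z_n\}$ but via a path $\gamma:[0,\infty)\to L$ with $\bigcap_T\overline{\gamma([T,\infty))}=Z$, approximated first by ``$W$-directed paths'' in the simplex (Lemma~\ref{L:directApprox}) and then by walks whose radial projections track those directed paths (Lemma~\ref{L:closeWalk}). Connectedness of $Z$ enters exactly once, to build $\gamma$; after that everything is quantitative Euclidean geometry.
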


In fact in our construction $X$ and $Y$ will be the same space, albeit with a different action of $G$.  Our construction of $X$, $G$, and $f$ depends only on $n$, thus all $Z \subset \mathbb{R}^n$ arise as boundary limits obtained from one particular quasi-isometry.

The construction in this chapter is heavily influenced by the constructions of Croke and Kleiner in \cite{CK00} and its generalizations by C. Mooney in \cite{CMpp}.  In fact, one may view the space $X$ as a higher-dimensional analogue of the ``blocks'' used in their construction.

If $c$ is a path in a space, we will occasionally fail to distinguish between $c$ and its image.

For a point $p$ in a metric space and $r > 0$, we will denote by $B_r(p)$ the open ball of radius $r$ centered at $p$, and for a set $Q$ we will use $N_r(Q)$ to mean the $r$-neighborhood of the set $Q$, i.e., $\bigcup_{p\in Q} B_r(p)$.

The chapter is organized as follows:  Section \ref{S:construct} constructs the group $G$, its actions on the space $X$, and the quasi-isometry $f$.  Section \ref{S:GeoIms} describes the behavior of certain geodesics under the map $f$.  Section \ref{S:Vwalk} defines the concept of a {\em directed path} in a simplex, then proves the main theorem, deferring the proofs of some technical lemmas to the final section.  Section \ref{S:Remarks} gives some remarks on the construction.  Finally, Section \ref{S:Lemmas} proves the lemmas required for the arguments in Section \ref{S:Vwalk}.

\section{Constructing $X$, $G$, and $f$}\label{S:construct}

We begin by constructing our CAT(0) space $X$, and the group $G$ which will act on it geometrically.  Throughout the chapter we will assume a fixed integer $n$.

Consider $(n+1)$-dimensional Euclidean space.  Consider the group of isometries on this space generated by translations by $e_0,...,e_n$, the standard basis vectors.  The quotient of Euclidean space by this action is $T^{n+1}$, the standard $(n+1)$-torus.  Take $n$ disjoint copies of $T^{n+1}$, and identify the images of the subspaces spanned by $e_1,...,e_n$ in each torus.  Call the resulting space $\bar{X}$, and the identified image of the origin $\bar{p}$.  Denote its universal cover by $X$.  Choose a lift $p \in X$ of $\bar{p}$.  The reader may easily verify the following properties:

\begin{proposition} \begin{enumerate}

\item $\bar{X}$ is a space of nonpositive curvature.

\item $\pi_1(\bar{X}) = F_n \times \mathbb{Z}^n$, where $F_n$ denotes the free group on $n$ generators.

\item $X$ is a CAT(0) space, and is isometric to $T \times \mathbb{R}^n$, where $T$ is a regular tree of degree $2n$.

\item $p$ can be chosen so that $p = (q,\overrightarrow{0}) \in T \times \mathbb{R}^n = X$, where $q$ is a vertex of $T$.

\end{enumerate} \end{proposition}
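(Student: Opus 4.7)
The plan is to identify $\bar{X}$ as a metric product $W \times T^n$, where $W = \bigvee_{i=1}^n S^1_i$ is a wedge of $n$ unit circles meeting at a point $w_0$, and $T^n = \mathbb{R}^n/\mathbb{Z}^n$ is the standard flat $n$-torus. All four claims follow quickly from this decomposition.

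\emph{Step 1 (Product decomposition).} Each copy of $T^{n+1} = \mathbb{R}^{n+1}/\mathbb{Z}^{n+1}$ factors isometrically as $S^1 \times T^n$, where the $S^1$ factor records the $e_0$-coordinate and the $T^n$ factor records the $e_1, \dots, e_n$ coordinates. The subspace identified across the $n$ copies is the image of $\mathrm{span}(e_1, \dots, e_n)$, namely $\{*\} \times T^n$ inside $S^1 \times T^n$. The quotient of the disjoint union of the $n$ copies by this identification is therefore canonically and isometrically $\bigl(\bigvee_{i=1}^n S^1_i\bigr) \times T^n = W \times T^n$.

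\emph{Step 2 (Claims (1) and (2)).} Both factors are locally CAT(0): $T^n$ is flat, and $W$ is a finite graph, which is locally CAT(0) at each edge point and at its unique vertex (the link is a discrete set, so the link condition is trivially satisfied). A product of nonpositively curved spaces is nonpositively curved, giving Claim (1). For Claim (2), the fundamental group of a product is the product of the fundamental groups, so $\pi_1(\bar{X}) = \pi_1(W) \times \pi_1(T^n) = F_n \times \mathbb{Z}^n$.

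\emph{Step 3 (Claims (3) and (4)).} The universal cover of a product is the product of the universal covers. Here $\widetilde{T^n} = \mathbb{R}^n$, and $\widetilde{W}$ is the regular tree $T$ of valence $2n$, since the single vertex of $W$ has valence $2n$ (each of the $n$ loops contributes two half-edges). Hence $X = T \times \mathbb{R}^n$, which is CAT(0) as a product of CAT(0) spaces, proving Claim (3). For Claim (4), the basepoint $\bar{p}$ corresponds to $(w_0, 0)$ under the identification $\bar{X} = W \times T^n$, so its lifts in $X = T \times \mathbb{R}^n$ are precisely the points $(v, m)$ with $v$ a vertex of $T$ and $m \in \mathbb{Z}^n$; we may therefore choose $p = (q, \vec 0)$ for any vertex $q \in T$.

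The only genuine verification is the product decomposition in Step 1; once that is in hand, each of the four conclusions is a standard consequence of how nonpositive curvature, fundamental groups, universal covers, and the CAT(0) property all behave under metric products. I do not expect any real obstacle, since the gluing in the construction happens precisely along a factor of the product structure rather than along a more complicated embedded submanifold.
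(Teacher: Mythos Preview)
Your proof is correct. The paper does not actually supply a proof of this proposition---it simply states that ``the reader may easily verify the following properties''---and your product decomposition $\bar{X} \cong W \times T^n$ with $W = \bigvee_{i=1}^n S^1$ is exactly the verification one would expect, from which all four claims follow by standard facts about nonpositive curvature, fundamental groups, and universal covers of metric products.
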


We then have that the group $G = F_n \times \mathbb{Z}^n$ acts geometrically on the CAT(0) space $X$.  The inverse image of each $T^{n+1}$ in $X$ is a disjoint union of Euclidean $(n+1)$-spaces, exactly one of which contains $p$ for each torus.  Call these spaces $E_1,...,E_n \subset X$.  Denote by $g_1,...,g_n$ the generators of the $F_n$ factor of $G$.  We may chose these generators so that the subgroup $\langle g_i \rangle \times \mathbb{Z}^n$, acts geometrically on $E_i$, via translations by $e_0,...,e_n$, such that $g_i$ translates by $e_0$ and the standard generators of $\mathbb{Z}^n$ translate by $e_1,...,e_n$.  Indeed, this allows us to assume that the last $n$ coordinates of each $E_i$ correspond exactly to the $\mathbb{R}^n$ factor of $X$.  We will use the symbol~$\cdot$ for this group action.

Define a group automorphism $\Phi \colon G \rightarrow G$ in the following way:  Choosing generators $e_1,...,e_n$ of $\mathbb{Z}^n$ in the standard way, let $\Phi\bigl((1,e_i)\bigr) =  (1,e_i)$.  Let $\Phi\bigl((g_i, \overrightarrow{0})\bigr) = (g_i, e_i)$.  The reader may easily check that $\Phi$ is an automorphism.  This allows us to define a new action of $G$ on $X$, $\circ$, by $g \circ x$ = $\Phi(g) \cdot x$.

\begin{proposition}\label{P:circGeo}
The action $\circ$ is geometric.
\end{proposition}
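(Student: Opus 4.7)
The plan is to observe that the new action $\circ$ is nothing more than the original geometric action $\cdot$ precomposed with the group automorphism $\Phi$, and that geometricity is preserved under such precomposition. So the proof should be a short, essentially formal verification of the three conditions (action by isometries, properness, cocompactness) in that order.

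First I would check that $\circ$ is actually a left action: for $g,h \in G$ and $x \in X$, one has $(gh)\circ x = \Phi(gh)\cdot x = \bigl(\Phi(g)\Phi(h)\bigr)\cdot x = \Phi(g)\cdot(\Phi(h)\cdot x) = g \circ (h \circ x)$, using that $\Phi$ is a homomorphism and $\cdot$ is an action. Next, for each $g$ the map $x \mapsto g \circ x$ equals $x \mapsto \Phi(g)\cdot x$, which is an isometry of $X$ because $\cdot$ is by isometries; hence $\circ$ is by isometries.

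For properness, fix a compact $K \subset X$. Then
\[
\{g \in G : (g\circ K) \cap K \neq \emptyset\} = \{g \in G : (\Phi(g)\cdot K)\cap K \neq \emptyset\} = \Phi^{-1}\bigl(\{h \in G : (h\cdot K)\cap K \neq \emptyset\}\bigr),
\]
which is finite since the set on the right is finite (properness of $\cdot$) and $\Phi$ is a bijection. For cocompactness, since $\Phi$ is surjective the $\circ$-orbit of any $x$ coincides with its $\cdot$-orbit: $G \circ x = \Phi(G)\cdot x = G \cdot x$. Hence any compact fundamental domain for $\cdot$ is also one for $\circ$.

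There is no real obstacle here; the only point worth flagging is that one is using all three properties of $\Phi$ (homomorphism, injective, surjective) — one each for the action property, properness, and cocompactness respectively — so the argument genuinely requires $\Phi$ to be an automorphism, not merely an endomorphism.
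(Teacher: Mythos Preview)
Your proof is correct and follows essentially the same approach as the paper: both argue that since $\Phi$ is an automorphism the $\circ$-orbits coincide with the $\cdot$-orbits (giving properness and cocompactness), and that each $g\circ\,\cdot = \Phi(g)\cdot\,\cdot$ is an isometry. Your version is simply more explicit, spelling out the action axiom and the properness computation in detail, and noting which property of $\Phi$ is used where.
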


\begin{proof}
Since $\Phi$ is an automorphism, $\circ$ has the same orbits as $\cdot$, and thus is proper and cocompact.  Indeed, since each transformation $\Phi(g) \cdot G$ is an isometry, each $g$ acts via $\circ$ by an isometry as well.  Thus, $\circ$ is a geometric action.
\end{proof}

For each $i = 1,...,n$, we define a linear transformation $f_i \colon E_i \rightarrow E_i$, which sends $e_0$ to $e_0 + e_i$ and is the identity on $e_1,...,e_n$.  Note that on each $E_i$, $f_i$ is $G$-equivariant in the sense that $f_i(g \cdot x) = g \circ f_i(x)$ for all $g \in \langle g_i \rangle \times \mathbb{Z}^n, x \in E_i$.

\begin{proposition}\label{P:fEqui}

\begin{enumerate}

\item $X$ is a union of flats of the form $g \cdot E_i$, with $g \in F_n \times \bigl\{\overrightarrow{0}\bigr\} \subset G$.

\item For $i \neq j$ and all $x \in E_i \cap E_j$, $f_i(x) = f_j(x) = x$.

\item There is a unique function $f \colon X \rightarrow X$ such that $f\mid_{E_i} = f_i$ for each $i = 1,...,n$, and such that $f$ is $G$-equivariant in the sense that $f(g \cdot x) = g \circ f(x)$ for all $g \in G, x \in X$.

\end{enumerate} \end{proposition}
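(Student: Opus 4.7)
The plan is to treat the three parts in turn, exploiting the product description $X \cong T \times \mathbb{R}^n$ in which $T$ is the Cayley tree of $F_n$. For (1), I would identify $T$ with that Cayley graph so that $q$ is the identity vertex; then each $E_i$ projects to the axis $L_i = \{g_i^k : k \in \mathbb{Z}\}$, and $g \cdot E_i$ projects to the translate $g L_i$. Every edge of $T$ carries some generator label $i$ and runs from some $v$ to $v g_i$, hence lies on $v L_i$; crossing with $\mathbb{R}^n$ gives $\bigcup_{g,i} g \cdot E_i = X$. For (2), the two distinct geodesics $L_i, L_j$ through $q$ meet only at $q$ (they are lines in a tree), so $E_i \cap E_j = \{q\} \times \mathbb{R}^n$; both $f_i$ and $f_j$ are linear with $e_0 \mapsto e_0 + e_i$ (resp.\ $e_0 + e_j$) while fixing $e_1, \dots, e_n$, so each restricts to the identity on this fiber.

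For (3), uniqueness is immediate: $G$-equivariance forces $f(g \cdot x) = g \circ f_i(x)$ for $x \in E_i$, and by (1) every point of $X$ is of this form. The substance is existence, i.e.\ consistency of this formula. The first, easier consistency check is within a single flat $g \cdot E_i$: if $g_1 \cdot x_1 = g_2 \cdot x_2$ with $x_1, x_2 \in E_i$, then $g_2^{-1} g_1$ stabilizes $E_i$ and so lies in $\langle g_i \rangle \times \mathbb{Z}^n$, on which $f_i$ is already $\circ$-equivariant by construction; hence the formula is unambiguous on each single flat.

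The main obstacle is the second check: if a point lies in both $g_1 \cdot E_i$ and $g_2 \cdot E_j$, the two candidate values of $f$ must match. Translating by $g_1^{-1}$ reduces this to showing $h \circ f_i(x) = f_j(h \cdot x)$ whenever $x \in E_i$ and $h \cdot x \in E_j$, with $h = g_1^{-1} g_2 \in F_n$. The key tree-theoretic input is that $L_j$ and $h L_i$ meet in at most one vertex $v$, so the overlap is a single $\mathbb{R}^n$-fiber. Writing $v = g_j^b q = h g_i^a q$ forces $h = g_j^b g_i^{-a}$, whence $\Phi(h) = (h,\, b e_j - a e_i)$. A direct computation then shows that both candidate values of $f$ at this point equal $(v,\, \vec{y} + b e_j)$: on the $E_i$ side, $f_i$ contributes an $a e_i$-shear which is exactly cancelled by the $-a e_i$ in $\Phi(h)$; on the $E_j$ side, $f_j$ contributes a $b e_j$-shear which matches the $+b e_j$ in $\Phi(h)$. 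The automorphism $\Phi$ has been tuned precisely so this cancellation works. Once this is verified, the local pieces patch into a well-defined $f$, and $G$-equivariance holds by construction.
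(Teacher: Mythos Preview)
Your argument is correct and follows the same route as the paper: define $f(g\cdot x)=g\circ f_i(x)$ and verify well-definedness. The paper's proof of (3) is essentially a one-line assertion that (1) and (2), together with the already-stated equivariance of each $f_i$ under $\langle g_i\rangle\times\mathbb{Z}^n$, suffice; implicitly, one adjusts $g_1,g_2$ by powers of $g_i,g_j$ so that they coincide, reducing any overlap to the base wall $E_i\cap E_j$ where (2) applies directly. You instead carry out the overlap check by an explicit coordinate computation with $\Phi$, which is more hands-on but equivalent, and has the virtue of making visible exactly how the definition of $\Phi$ is tuned to make the pieces match.

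One small slip: for your reduction to land on the displayed identity, $h$ should be $g_2^{-1}g_1$ rather than $g_1^{-1}g_2$. With $p=g_1\cdot x=g_2\cdot y$ and $x\in E_i$, $y\in E_j$, one gets $y=(g_2^{-1}g_1)\cdot x$, so the condition ``$h\cdot x\in E_j$'' and the equation $h\circ f_i(x)=f_j(h\cdot x)$ require $h=g_2^{-1}g_1$. The subsequent tree analysis and the computation $\Phi(h)=(h,\,be_j-ae_i)$ are self-consistent with this corrected $h$ and go through exactly as you wrote.
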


\begin{proof}  $X$ is of the form $T \times \mathbb{R}^n$, and so we can write $p$ in the form $(p_T, \overrightarrow{0})$, where $p_T$ is a vertex of $T$.  Since the $F_n$ factor of $G$ acts on $T$ in the standard way, any vertex in $T$ can be written as $g \cdot p_T$ for some $g \in F_n$.  The edges coming out of $p_T$ are all contained in one of the $E_i$, and represent either the $e_0$ or $-e_0$ direction.  Thus, any point $q$ on any edge can be expressed as $g \cdot v$ with $v = (a_0,0,0,...,0) \in E_i$.  Thus, any point $\left(q,(a_1,...,a_n)\right) \in T \times \mathbb{R}^n$ can be written as $g \cdot v'$ with $v' = (a_0,a_1,...,a_n) \in E_i$, proving (1).

The intersection of $E_i$ and $E_j$ for $i \neq j$ is the connected component of the inverse image of the common $n$-torus in $\bar{X}$ which contains $p$.  This is isometric to Euclidean $n$-space, and can be described as the convex hull of the orbit of $p$ under the action of the $\mathbb{Z}^n$ factor of $G$.  This is the space spanned by $e_1,...,e_n$ in the coordinates of both $E_i$ and $E_j$, and both $f_i$ and $f_j$ are the identity map on this space, which proves (2).

To see (3), we can simply define $f(g \cdot x) = g \circ f_i(x)$ for $x \in E_i$.  (1) and (2) guarantee that this function is well-defined on all of $X$.  (1), together with the equivariance condition, guarantees the uniqueness of $f$.

\end{proof}

Defining $f$ as the unique function from Proposition \ref{P:fEqui} (3), we record here some of its properties.

\begin{proposition}\label{P:projcommutes}
For all $(q,v) \in X$, $f\bigl((q,v)\bigr) = (q, w)$ for some $w \in \mathbb{R}^n$, that is, $f$ is the identity on the $T$ factor of $X$.
\end{proposition}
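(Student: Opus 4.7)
The plan is to reduce the claim to two easy checks: that each $f_i$ preserves the $T$-coordinate of points in $E_i$, and that the automorphism $\Phi$ is the identity on the $F_n$ factor of $G$, so that the actions $\cdot$ and $\circ$ induce the same transformation on the $T$-factor of $X = T \times \mathbb{R}^n$.

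First, I would fix notation and describe how each flat $E_i$ sits inside $T \times \mathbb{R}^n$. By the construction (and part (4) of the first proposition, plus the remark that the last $n$ coordinates of each $E_i$ correspond to the $\mathbb{R}^n$-factor of $X$), $E_i$ has the form $L_i \times \mathbb{R}^n$, where $L_i \subset T$ is the bi-infinite geodesic through $p_T$ consisting of edges in the $\pm e_0$ direction of $E_i$. Writing a point of $E_i$ in its internal coordinates as $(a_0, a_1, \dots, a_n)$, it corresponds to $\bigl(L_i(a_0),(a_1,\dots,a_n)\bigr) \in T \times \mathbb{R}^n$. Since $f_i$ fixes $e_1, \dots, e_n$ and sends $e_0$ to $e_0 + e_i$, it sends $(a_0, a_1, \dots, a_n)$ to $(a_0, a_1, \dots, a_{i-1}, a_i + a_0, a_{i+1}, \dots, a_n)$; in particular, $a_0$ is preserved, so the $T$-coordinate $L_i(a_0)$ is preserved. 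Thus the proposition holds on each $E_i$.

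Second, I would show that the $T$-coordinate of $g \cdot x$ equals the $T$-coordinate of $g \circ x$ for every $g \in G$ and $x \in X$. Under $\cdot$, an element $(h, \vec{v}) \in F_n \times \mathbb{Z}^n$ acts on $(q, \vec{w}) \in T \times \mathbb{R}^n$ as $(h \cdot q, \vec{w} + \vec{v})$, so only the $F_n$ factor of $g$ affects the $T$-coordinate. From the definition of $\Phi$ on the generators $(g_i, \vec{0})$ and $(1, e_i)$, one sees by induction on word length that $\Phi$ projects to the identity on the $F_n$ factor: if $\Phi(h, \vec v) = (h', \vec{v}')$ then $h' = h$. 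Hence $g \circ x$ and $g \cdot x$ differ only in the $\mathbb{R}^n$-coordinate, and have the same $T$-coordinate.

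Finally, given $(q, v) \in X$, by Proposition \ref{P:fEqui}(1) we write $(q, v) = g \cdot y$ for some $g \in F_n \times \{\vec{0}\}$ and some $y \in E_i$. By the defining equivariance, $f\bigl((q,v)\bigr) = g \circ f_i(y)$. By the first step, $f_i(y)$ has the same $T$-coordinate as $y$; by the second step, applying $g$ via $\circ$ vs.~$\cdot$ does not alter the $T$-coordinate; and $g \cdot y = (q,v)$ by assumption. Chaining these equalities shows the $T$-coordinate of $f\bigl((q,v)\bigr)$ equals $q$, which is the desired conclusion. There is no real obstacle here; the only thing to be careful about is keeping straight the three coordinate systems (internal $E_i$ coordinates, the global $T \times \mathbb{R}^n$ splitting, and the dependence of $\Phi$ on the $F_n$ factor), which the first two steps above isolate cleanly.
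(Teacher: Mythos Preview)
Your proof is correct and follows essentially the same approach as the paper: first observe that each $f_i$ preserves the $e_0$-coordinate and hence the $T$-coordinate on $E_i$, then use the equivariance $f(g\cdot x)=g\circ f_i(x)$ together with the fact that $\Phi$ is the identity on the $F_n$ factor (so $\cdot$ and $\circ$ agree on the $T$-factor) to handle the remaining points. Your write-up is simply a more detailed version of the paper's argument.
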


\begin{proof}
The proposition follows from the definition of $f$, since the $T$ coordinate of $x \in E_i$ depends only on the $e_0$ coordinate of $x$ in $E_i$, which is unchanged by $f_i$.  For points not in any of the $E_i$, since $f(g \cdot x) = g \circ f_i(x)$, we need only check that~$\circ$ and~$\cdot$ act identically on the $T$ factor of $X$.  Since $g \circ x = \Phi(g) \cdot x$ and $\Phi$ doesn't affect the $F_n$ factor of $G$, the result follows as the subgroup $\{1\} \times \mathbb{Z}^n \subset G$ acts trivially on the $T$ factor of $X$.
\end{proof}

We will put coordinates on the flats $g \cdot E_i$, for $g \in F_n \times \{\overrightarrow{0}\}$, however, $g \cdot E_i$ and $g' \cdot E_i$ may be the same flat for distinct $g, g'$.  Since $E_i$ is the convex hull of the orbit of the basepoint under the subgroup $\langle g_i \rangle \times \mathbb{Z}^n$ for some generator $g_i$, we have $g \cdot E_i = g' \cdot E_i$ if and only if $g' = gg_i^n$ for some $n \in \mathbb{Z}$.  Thus, for any such flat, we can always choose a $g$ of minimal length so that our flat is of the form $g \cdot E_i$, and we put coordinates on such a flat by translating the coordinates of $E_i$ via the isometry induced by this minimal $g$.

\begin{proposition}\label{P:affine}
For $g \in G$ and $i \in \{1,...,n\}$, $f(g \cdot E_i) = g \cdot E_i$, and $f\mid_{g\cdot E_i}$ is an affine transformation of $g\cdot E_i$.
\end{proposition}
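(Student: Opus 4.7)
The plan is to leverage the $G$-equivariance of $f$ (Proposition~\ref{P:fEqui}(3)) together with an explicit formula for the automorphism $\Phi$. Writing $G = F_n \times \mathbb{Z}^n$, the defining values on generators extend uniquely to $\Phi(w, v) = (w, v + \phi(w))$, where $\phi \colon F_n \to \mathbb{Z}^n$ is the homomorphism sending each generator $g_j$ to $e_j$. The key consequence is that $g^{-1}\Phi(g) = (1, \phi(w))$ always lies in the subgroup $\{1\} \times \mathbb{Z}^n$, which acts on $X$ by translation in the $\mathbb{R}^n$ factor and therefore stabilizes every $E_i$ (since the last $n$ coordinates of each $E_i$ coincide with that $\mathbb{R}^n$).

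For the setwise claim $f(g \cdot E_i) = g \cdot E_i$, the equivariance immediately gives
\[
f(g \cdot E_i) \;=\; g \circ f_i(E_i) \;=\; \Phi(g) \cdot f_i(E_i) \;=\; \Phi(g) \cdot E_i,
\]
using that $f_i$ maps $E_i$ bijectively onto itself. The observation above then yields $\Phi(g) \cdot E_i = g \cdot E_i$.

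For the affineness statement, let $h \in F_n \times \{\overrightarrow{0}\}$ be of minimal length with $h \cdot E_i = g \cdot E_i$, so that by definition the coordinates on this flat are those of $E_i$ transported by the isometry $x \mapsto h \cdot x$. A typical point takes the form $h \cdot x$ for $x \in E_i$, and equivariance gives
\[
f(h \cdot x) \;=\; \Phi(h) \cdot f_i(x) \;=\; h \cdot \bigl((1, \phi(h)) \cdot f_i(x)\bigr) \;=\; h \cdot \bigl(f_i(x) + (0, \phi(h))\bigr),
\]
since $(1, \phi(h))$ translates $E_i$ by the vector $(0, \phi(h))$ in its coordinates. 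In the transported coordinates on $g \cdot E_i$, this reads $x \mapsto f_i(x) + (0, \phi(h))$, which is affine because $f_i$ is linear.

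The only real obstacle is bookkeeping: keeping the two actions $\cdot$ and $\circ$ cleanly distinct, and recognizing that the correction term $g^{-1}\Phi(g)$ lives in the $\mathbb{R}^n$-direction common to every $E_i$. Once this observation is made explicit, both conclusions fall out of direct computation, and there is no subtlety about which minimal $h$ is chosen since different minimal representatives differ by a power of $g_i$ acting on $E_i$ by translation.
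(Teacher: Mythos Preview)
Your argument is correct and follows essentially the same route as the paper: both show that, after conjugating back to $E_i$ via the $\cdot$-action, the restriction $f|_{g\cdot E_i}$ becomes $f_i$ composed with a translation in the $\mathbb{R}^n$ direction. The only difference is cosmetic---you make the computation explicit by writing out $\Phi(w,v)=(w,v+\phi(w))$ and identifying the translation vector as $(0,\phi(h))$, whereas the paper leaves the vector abstract as $u = g\circ p - g\cdot p$; your version is arguably cleaner and also makes the setwise equality $f(g\cdot E_i)=g\cdot E_i$ immediate rather than implicit.
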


\begin{proof}
Choose $g$ so that the origin in $g \cdot E_i$ is $g \cdot p$.  Let $A\colon E_i \rightarrow E_i$ be defined by $A(x) = g^{-1} \cdot \left(g \circ x \right)$.  Since $g \circ x = f(g \cdot x)$, it suffices to prove that $A$ is affine.

Since $\circ$ and $\cdot$ have the same effect on the $T$ component of $X$, we immediately see that $A$ is the identity on the $e_0$ component of $E_i$.  Let $u$ be the vector given by $g \circ p - g \cdot p$, taken in the coordinates of $g \cdot E_i$.  Then for $x \in E_i$, we have $g \circ x = g \cdot f_i(x) + u$ since the coordinates of $g \cdot E_i$ are just translated coordinates of $E_i$.  Letting $A_u$ be translation by $-u$, we then have that $A_u(A(x)) = f_i(x)$.

Thus, after composing $A$ with a translation we have that $A$ is linear, and so $A$ and therefore $f\mid_{g\cdot E_i}$ is an affine transformation.


\end{proof}

The remainder of the chapter is devoted to proving the following, which in light of Propositions \ref{P:circGeo} and \ref{P:fEqui}(3) implies the main theorem:

\begin{theorem}\label{T:impliesMain} Let $X$ and $f$ be defined as above.  Given any compact, connected set $Z \subset \mathbb{R}^{n-1}$, there is a geodesic ray $c:[0,\infty)\rightarrow X$ such that the boundary limit of $f(c)$ is homeomorphic to $Z$.
\end{theorem}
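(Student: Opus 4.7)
The plan is to confine $c$ to the convex subspace $T \times \{\vec{0}\} \subset X$, reduce the theorem to a combinatorial question about a running average in the standard simplex $\Delta^{n-1}$, and solve that via a multi-scale construction. I would take $c(s) = (c_T(s), \vec{0})$ with $c_T$ a unit-speed geodesic ray in $T$ from $p_T$, and record the successive edges as $(j_l, \epsilon_l)_{l \ge 1}$ so that the $l$-th edge lies in some flat $g \cdot E_{j_l}$ in the $\epsilon_l e_0$ direction. A direct computation from Proposition \ref{P:fEqui}(3) and the identity $\Phi(g_i) = (g_i, e_i)$ gives, at integer times,
$$f(c(k)) = \bigl(c_T(k),\ S(k)\bigr), \qquad S(k) = \sum_{l=1}^{k} \epsilon_l e_{j_l} \in \mathbb{R}^n,$$
with $S$ linearly interpolated in between. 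Since $d(c(s),p) = s$, the ray $c_T$ converges to some $\xi \in \partial T$. Writing $\partial X = \partial T \ast S^{n-1}$ as a spherical join, I would verify that a sequence $f(c(s_k))$ converges in $\overline{X}$ to a point of $\partial X$ if and only if $S(s_k)/s_k$ converges in $\overline{B}_1(\vec 0) \subset \mathbb{R}^n$, and that the map $v \mapsto (v/|v|, \arctan|v|)$, extended by $\vec 0 \mapsto \xi$, is a homeomorphism from the accumulation set of $\{S(s)/s\}_{s \ge 0}$ onto the boundary limit of $f(c)$.

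Next I would restrict to walks with $\epsilon_l = +1$ for every $l$; such a walk is automatically a geodesic in $T$, because after a $+e_i$-step the only forbidden continuation is $-e_i$, which never occurs. Under this restriction $S(k)/k$ lies in the standard simplex $\Delta^{n-1} := \{x \in \mathbb{R}^n : x_i \ge 0,\ \sum x_i = 1\}$. Since $\mathrm{int}\,\Delta^{n-1} \cong \mathbb{R}^{n-1}$ and $Z$ is compact, I may fix a topological embedding of $Z$ as a compact set $Z' \subset \mathrm{int}\,\Delta^{n-1}$. The theorem is thereby reduced to constructing a sequence $(j_l) \in \{1, \ldots, n\}^{\mathbb{N}}$ whose induced trajectory $k \mapsto S(k)/k$ has accumulation set exactly $Z'$.

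For the construction itself, the driving identity
$$\frac{S(k+1)}{k+1} - \frac{S(k)}{k} = \frac{1}{k+1}\Bigl(e_{j_{k+1}} - \frac{S(k)}{k}\Bigr)$$
shows that at each step the running average is nudged by an amount $\sim 1/k$ toward one of the $n$ vertices of $\Delta^{n-1}$. The ``directed path in a simplex'' formalism of Section \ref{S:Vwalk} should be precisely a prescription of these vertex choices, producing a piecewise-linear trajectory that can be steered along any prescribed continuous curve in $\Delta^{n-1}$. I would build the walk in phases: for each $k \ge 1$ fix a $1/k$-net $N_k$ of $Z'$, and in phase $k$ steer the running average in turn through small neighborhoods of each point of $N_k$, choosing the phase lengths $T_k$ so that $T_k \gg T_1 + \cdots + T_{k-1}$ and hence phase $k$ dominates the running average throughout.

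The hard part will be showing that the accumulation set equals \emph{exactly} $Z'$, with no spurious points arising from transitions between targets. Since $Z$ is only assumed connected (not path-connected, as with the topologist's sine curve), one cannot always move between targets while remaining inside $Z'$, so one must quantitatively control both how far the trajectory strays from $Z'$ during transitions and how long transitions take relative to the holding time near each target. I expect these estimates to be the content of the technical lemmas deferred to Section \ref{S:Lemmas}.
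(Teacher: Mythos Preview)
Your proposal is correct and follows essentially the same route as the paper. You restrict to geodesics in $T\times\{\vec 0\}$ given by positive words, compute that $f(c(k))=(k,S(k))$ in the half-space $c\times\mathbb{R}^n$, reduce the boundary limit to the accumulation set of the running average $S(k)/k$ in the standard simplex, embed $Z$ there, and then steer the average through successively finer nets of $Z$; this is exactly the paper's Lemma~\ref{P:cWalk} plus Theorem~\ref{T:TimpW}, only with the Euclidean simplex $\Delta^{n-1}$ in place of the paper's spherical simplex $L$ (the two are identified by $v\mapsto\projM(1,v)$, and your driving identity is the affine version of Proposition~\ref{P:projectSeg}).

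Two small remarks. First, the paper handles your ``hard part'' not by making phase $k$ dominate earlier phases, but by the observation that a connected subset of a simplex has path-connected open neighborhoods: one fixes a continuous path $\gamma_k\subset N_{1/k}(Z)$ through the net $S_k$ and then approximates the concatenated $\gamma$ by a walk with error $2^{-i}$ on the $i$-th piece (Lemmas~\ref{L:directApprox} and~\ref{L:closeWalk}), so transitions never leave $N_{1/k}(Z)$ and no domination condition is needed. Second, the paper works inside the convex subspace $c\times\mathbb{R}^n$ rather than via the join description of $\partial X$; this sidesteps your $\arctan|v|$ bookkeeping, since the boundary of a convex subset embeds in $\partial X$ directly.
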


\section{Geodesic images under $f$}\label{S:GeoIms}

This section will use the notation defined in section \ref{S:construct}.

Consider a geodesic ray $c \colon [0, \infty)$ in the $2n$-regular tree $T$, with the property $c(0) = p_T$.  Since edges in $T$ have length 1, $c(t)$ is a vertex precisely when $t$ is an integer.  So, under the standard action of $F_n$ on $T$, $c(1) = g_{i_1}(p)$ for some generator $g_{i_1} \in F_n$, $c(2) = g_{i_1}g_{i_2}(p)$ for some second generator $g_{i_2}$, and in general, we have

\[
c(k) = w_k(p),
\]

\noindent where $w_k \in F_n$ is represented by a word of length $k$ in the generators and their inverses.  For the remainder of the section, we will assume that $c$ is chosen such that $w_k$ is a {\em positive} word in the $g_i$, that is, can be written using only the generators and never their inverses.  $w_k$ is a proper initial substring of $w_{k+1}$, and so there is a unique infinite sequence of generators such that every $w_k$ is an initial subsequence.  We define a sequence of integers $\{I_k\}$ such that this infinite sequence is $g_{I_1}, g_{I_2}, g_{I_3},...$.

For the remainder of the chapter, we will consider $T$ to be a subset of $X$ by identifying it with $T \times \{\overrightarrow{0}\}$.  Since $c \subset T$, $c$ lies in the subspace $c \times \mathbb{R}^n$.

\begin{proposition} \label{imInC}
$f(c) \subset c \times \mathbb{R}^n$.
\end{proposition}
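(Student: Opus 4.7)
The plan is to observe that this proposition is an almost immediate corollary of Proposition \ref{P:projcommutes}, which asserts that $f$ acts as the identity on the $T$ factor of $X = T \times \mathbb{R}^n$. Nothing additional about the particular geodesic $c$ or the positivity of the generators need be used at this stage; only the product structure of $X$ and the behavior of $f$ with respect to it.

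The first step is to unpack the identification the paper is using. The geodesic $c$ is parametrized in the tree $T$, and $T$ is viewed as a subset of $X$ via the embedding $T \hookrightarrow T \times \{\overrightarrow{0}\} \subset X$. So for each $t \in [0,\infty)$, the point $c(t)$, when regarded as an element of $X$, has the form $\bigl(c(t), \overrightarrow{0}\bigr) \in T \times \mathbb{R}^n$.

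Next I would apply Proposition \ref{P:projcommutes} directly to this point. That proposition gives $f\bigl((c(t),\overrightarrow{0})\bigr) = \bigl(c(t), w\bigr)$ for some $w \in \mathbb{R}^n$, with the $T$-coordinate unchanged. Since $c(t) \in c$, the image $\bigl(c(t), w\bigr)$ lies in $c \times \mathbb{R}^n$, which is precisely the conclusion.

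There is no real obstacle here; the only subtlety is keeping the two identifications straight (of $c \subset T$ and of $T \subset X$). In particular, nothing in the argument uses that $c$ is a geodesic or that the sequence of generators $g_{I_k}$ is positive; these hypotheses are there to set up the notation used in later sections.
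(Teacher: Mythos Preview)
Your proof is correct and follows exactly the same approach as the paper, which simply notes that the result is an easy consequence of Proposition~\ref{P:projcommutes}. Your added explanation of the identifications $c \subset T \subset X$ is helpful but not strictly necessary.
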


\begin{proof}
This is an easy consequence of Proposition \ref{P:projcommutes}.
\end{proof}

The space $c \times \mathbb{R}^n$ is a Euclidean half-space of dimension $n+1$, and is a closed, convex subset of $X$.  We put coordinates on this half space by identifying the point $(c(t), a_1,...,a_n)$ with $(t,a_1,...,a_n) \in \mathbb{R}^{n+1}$.

\begin{proposition}\label{P:spaceType}
For each integer $k \geq 0$, the space $c\mid_{[k,k+1]} \times \mathbb{R}^n$ lies in exactly one of the flats $g \cdot E_i$.  In this space, the coordinates on $c \times \mathbb{R}^n$ and on $g \cdot E_i$ differ by a constant vector $v$, which is an integer multiple of $e_0$.
\end{proposition}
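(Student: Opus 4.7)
The plan is to identify the unique flat $g \cdot E_i$ that contains $c\mid_{[k,k+1]} \times \mathbb{R}^n$, and then compare its intrinsic coordinates to those inherited from $c \times \mathbb{R}^n$. For existence, I would observe that the edge $c\mid_{[k,k+1]}$ joins $w_k \cdot p_T$ to $w_k g_{I_{k+1}} \cdot p_T$; since $g_{I_{k+1}}$ translates $E_{I_{k+1}}$ by $e_0$, the axis of $g_{I_{k+1}}$ through $p_T$ lies inside $E_{I_{k+1}}$, and translating by $w_k \in F_n \times \{\overrightarrow{0}\}$ places our edge inside $w_k \cdot E_{I_{k+1}}$. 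Because the last $n$ coordinates of each $E_i$ coincide with the $\mathbb{R}^n$ factor of $X$ and the $F_n$-factor of $G$ acts trivially on that factor (each $g_i$ translates only in the $e_0$ direction on $E_i$), this upgrades to $c\mid_{[k,k+1]} \times \mathbb{R}^n \subseteq w_k \cdot E_{I_{k+1}}$.

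For uniqueness, I would use the observation that the standard $F_n$-action on $T$ labels every edge by one of the generators $g_1, \ldots, g_n$, and each vertex has exactly two edges of each such type. A flat $g \cdot E_j$ projects to $T$ as the axis of the conjugate $g g_j g^{-1}$, all of whose edges are of $g_j$-type, and from any vertex on such an axis the continuation is forced to be the other edge of $g_j$-type at that vertex. Hence any edge of $g_j$-type lies on a unique flat of type $E_j$, and on no flats of type $E_l$ with $l \neq j$. Applied to the $g_{I_{k+1}}$-type edge $c\mid_{[k,k+1]}$, this yields uniqueness of $w_k \cdot E_{I_{k+1}}$.

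For the coordinate comparison, I would factor $w_k = w\, g_{I_{k+1}}^{\,j}$ with $w$ not ending in $g_{I_{k+1}}$, so that $w$ is the minimal-length representative used to put coordinates on $w \cdot E_{I_{k+1}} = w_k \cdot E_{I_{k+1}}$. Since $w$ equals $w_{k-j}$ in the positive-word geodesic, the origin of the flat's coordinates, $w \cdot p$, sits at $c(k-j)$ in the tree, i.e., at $(k-j, \overrightarrow{0})$ in the $c \times \mathbb{R}^n$ chart. As $g_{I_{k+1}}$ translates flat coordinates purely by $e_0$ and acts trivially on the $\mathbb{R}^n$ factor, the two coordinate systems agree on the last $n$ entries and differ by $(k-j)\, e_0$ in the first, the required integer multiple of $e_0$.

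The only real obstacle is the bookkeeping in this last step: one must disentangle $w_k$, which labels the geodesic vertex, from the shortened $w$, which labels the flat, and verify that the discrepancy between their origins is confined to the $e_0$ direction. Once the explicit generator choices fixed in the construction of $X$ are kept in mind, the rest follows straightforwardly.
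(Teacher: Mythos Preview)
Your proposal is correct and follows essentially the same approach as the paper: both arguments rest on the fact that each edge of $T$ lies in a unique flat $g\cdot E_i$, that the last $n$ coordinates of both charts are literally the $\mathbb{R}^n$-factor coordinates of $X$, and that the $e_0$ coordinate in each chart measures tree distance from a vertex of $T$, forcing the discrepancy to be an integer multiple of $e_0$. The paper's version is terser---it asserts the edge-to-flat correspondence without the axis/labeling discussion and argues the integer offset abstractly rather than via your explicit factorization $w_k = w\,g_{I_{k+1}}^{\,j}$---but the underlying ideas coincide.
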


\begin{proof}
The segment $c\mid_{[k,k+1]}$ is an edge of the tree $T$, and each edge of $T$ is contained in exactly one flat $g \cdot E_i$.  But since the flat containing a point depends only on the value of the $T$ factor of $X$, we have that $c\mid_{[k,k+1]} \times \mathbb{R}^n$ is contained in exactly one $g \cdot E_i$.

The last $n$ coordinates in $c \times \mathbb{R}^n$ are the coordinates from the $\mathbb{R}^n$ factor of $X$.  But $g \cdot E_i$ has coordinates translated from $E_i$ via an element of $F_n \times \{\overrightarrow{0}\}$, which doesn't change the $\mathbb{R}^n$ factor.  Since the last $n$ coordinates of $E_i$ are just the coordinates from the $\mathbb{R}^n$ factor of $X$, we see that the last $n$ coordinates of $g \cdot E_i$ and $c \times \mathbb{R}^n$ agree on their intersection.

If $x$ is a point in $c\mid_{[k,k+1]} \times \mathbb{R}^n$, then the $e_0$ coordinate in both $c \times \mathbb{R}^n$ and $g \cdot E_i$ differs by a constant from $d\left(\pi_T(x), c(k)\right)$, where $\pi_T$ is projection onto the $T$ factor.  Thus the two coordinates differ by a constant from each other.  Furthermore, since the origin (under both coordinate systems) is a vertex of $T$, they must differ by an integer constant, proving the proposition.
\end{proof}

\begin{definition} Let $V = \{v_1,...,v_m\}$ be a finite set of vectors in a Euclidean half-space $E$.  A path $c:[0,\infty)\rightarrow E$ or $c:[0,a]\rightarrow E$ is a {\em walk in $E$ over $V$} if $a \in \mathbb{Z}$ and there is a sequence $v_{j_1}, v_{j_2}, v_{j_3},...$ of vectors in $V$ such that for each integer $k$ with $[k,k+1]$ in the domain of $c$, $c(k+1) = c(k) + v_{j_k}$, and $c\mid_{[k, k+1]}$ maps linearly to the line segment connecting $c(k)$ and $c(k) + v_{j_k}$.
\end{definition}

Note that a walk is determined by a finite or infinite sequence of vectors in $V$ together with a starting point $c(0)$.

For $i = 1,...,n$, let $v_i = e_i + e_0$.  Recall that the sequence of integers $\{I_k\}$ was chosen so that $c(k) = g_{I_1}...g_{I_k}(p)$.

\begin{lemma} \label{P:cWalk} $f(c)$ is a walk in $c \times \mathbb{R}^n$ over the set $\{v_1,...,v_n\}$ starting at the origin.  The sequence of vectors corresponding to this walk is $v_{I_1}, v_{I_2}, v_{I_3},...$.
\end{lemma}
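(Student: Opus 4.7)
The plan is to verify the claim in two stages: by Proposition \ref{imInC} the image $f(c)$ already lies in $c \times \mathbb{R}^n$, so I just need to check the walk structure. First I will compute the images $f(c(k))$ at each integer vertex using the $G$-equivariance of $f$, confirming that they land at the partial sums of the $v_{I_j}$; then I will argue that $f$ is affine on each segment $c\mid_{[k,k+1]}$, which forces $f \circ c$ to be the correct linear interpolation between consecutive vertex images. This sidesteps most direct computation with the individual affine formulas for each $f_i$.

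For the vertex computation, I use $c(k) = w_k \cdot p$ with $w_k = g_{I_1} g_{I_2} \cdots g_{I_k}$, together with equivariance, to write $f(c(k)) = w_k \circ f(p) = w_k \circ p$; here $f(p) = p$ because $p$ is the common origin of every $E_i$ and each $f_i$, being linear, fixes the origin. Now $w_k \circ p = \Phi(w_k) \cdot p$ by definition of $\circ$, and since $\Phi$ is a homomorphism with $\Phi\bigl((g_i, \overrightarrow{0})\bigr) = (g_i, e_i)$,
\[
\Phi(w_k) = (w_k, e_{I_1} + e_{I_2} + \cdots + e_{I_k}).
\]
Applied to $p = (p_T, \overrightarrow{0})$ this gives $(c(k), e_{I_1} + \cdots + e_{I_k})$, which under the identification of $c \times \mathbb{R}^n$ with $\mathbb{R}^{n+1}$ is exactly $v_{I_1} + v_{I_2} + \cdots + v_{I_k}$, the desired location of the walk after $k$ steps.

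To connect the vertex values, Proposition \ref{P:spaceType} places the slab $c\mid_{[k,k+1]} \times \mathbb{R}^n$ inside a single flat $g \cdot E_{I_{k+1}}$, and Proposition \ref{P:affine} states that $f$ restricted to that flat is affine. Consequently $f \circ c\mid_{[k,k+1]}$ is the affine image of a constant-speed parametrization of a tree edge, hence a linear parametrization of the segment from $f(c(k))$ to $f(c(k+1)) = f(c(k)) + v_{I_{k+1}}$, which is precisely what the walk definition demands. I expect the main subtlety to be purely clerical: juggling the three coordinate systems --- on $E_i$, on the translated flat $g \cdot E_i$, and on the ambient half-space $c \times \mathbb{R}^n$ --- and confirming that the integer-$e_0$ offsets allowed by Proposition \ref{P:spaceType} do not disturb the walk structure. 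The $\Phi$-based computation at vertices sidesteps most of this bookkeeping.
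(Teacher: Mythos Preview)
Your argument is correct. The difference from the paper's proof is localized to how the vertex values $f(c(k))$ are identified. The paper works \emph{locally}: on each slab $c\mid_{[k,k+1]}\times\mathbb{R}^n$ it observes that $c(k+1)-c(k)=e_0$ in the flat coordinates, and since the linear part of the affine map $f\mid_{g\cdot E_i}$ is that of $f_i$ (sending $e_0\mapsto e_0+e_i$), the difference $f(c(k+1))-f(c(k))$ must equal $v_i$. You instead work \emph{globally}: equivariance gives $f(c(k))=w_k\circ p=\Phi(w_k)\cdot p$, and a direct computation of $\Phi(w_k)$ in the product group yields the partial sum $v_{I_1}+\cdots+v_{I_k}$ outright. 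Your route is arguably cleaner---it pins down absolute positions (including the starting point $f(c(0))=p$, which the paper leaves implicit) and never needs to track how the translation constant in Proposition~\ref{P:affine} interacts with the $e_0$-offset of Proposition~\ref{P:spaceType}. The paper's route, on the other hand, makes visible why the index of the step vector is $I_{k+1}$: it is the type of the flat containing the $k$th edge. You assert this (``a single flat $g\cdot E_{I_{k+1}}$'') without justification; it follows immediately because the tree edge from $w_k\cdot p_T$ to $w_k g_{I_{k+1}}\cdot p_T$ lies in $w_k\cdot E_{I_{k+1}}$, but it would be worth saying so. Both proofs rely identically on Proposition~\ref{P:affine} for the piecewise-linear interpolation between vertices.
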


\begin{proof}
By Proposition \ref{P:spaceType}, the space $c\mid_{[k,k+1]} \times \mathbb{R}^n$ lies in some $g \cdot E_i$, and the coordinates on the two spaces coincide up to translation by an integer multiple of $e_0$.  In particular, $c\left([k,k+1]\right)$ is a line segment in the coordinates of $g \cdot E_i$, and since $f$ is affine on this space by Proposition \ref{P:affine}, $f\bigl(c\left([k,k+1]\right)\bigr)$ is a line segment in both coordinate systems.

To show that $f(c)$ is a walk, we thus need only to show that the vector $f(c(k+1)) - f(c(k))$ is one of $v_1,...,v_n$. This can be done in either coordinate system since they only differ by a translation.  Since $c(k+1) - c(k) = e_0$ in both coordinate systems, we then have $f(c(k+1)) - f(c(k)) = e_0 + e_i = v_i$, proving the proposition.
\end{proof}

\section{V-walks and boundary limits}\label{S:Vwalk}

In this section we will prove Theorem \ref{T:impliesMain}, deferring some technical arguments to the next section.  Recall that the {\em Hausdorff distance} $d_H(S,S')$ between two subsets $S$ and $S'$ of a metric space is the infimum of the set $\{\epsilon > 0 \mid S \subset N_\epsilon(S'), S' \subset N_\epsilon(S)\}$.

Consider a Euclidean half-space $E = \{(x_1,...,x_n) \in \mathbb{R}^n \mid x_0 \geq 0\}$.  $E$ is a CAT(0) space, and its boundary is a closed half-sphere $M$, which we give the angle metric (this is in fact the Tits metric).  Since each point in the boundary is represented by a unique ray emanating from $\overrightarrow{0}$, we will occasionally fail to distinguish between the boundary point and the ray. It will also be convenient to think of points in $E$ as vectors.

Given any $x \neq \overrightarrow{0} \in E$, we can draw the geodesic segment to $\overrightarrow{0}$ and then extend to a ray emanating from $\overrightarrow{0}$.  This gives a map $\projM \colon E - \{\overrightarrow{0}\} \rightarrow M$.  We will call the image of a point under this map the {\em projection of $x$ to $M$}.  Note that, for vectors $x$ and $u$, we have that $d\bigl(\projM(x), \projM(u)\bigr)$ is exactly the angle between $x$ and $u$ at the origin.

Let $V$ be any set of linearly independent vectors in $E$.  Let $W = \projM(V)$, and let $L \subset M$ be the spherical simplex obtained by taking the convex hull of $W$.  The result we will need to prove Theorem \ref{T:impliesMain} is the following:

\begin{theorem} \label{T:TimpW} Suppose $Z \subset L$ is connected and compact.  Then there is a walk in $E$ over $V$, starting from the origin, whose boundary limit is $Z$.
\end{theorem}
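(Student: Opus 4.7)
The plan is to translate the problem from the spherical simplex $L$ to the standard simplex $\Delta=\{(t_1,\dots,t_m):t_i\ge0,\,\sum t_i=1\}$. Define $\Phi\colon\Delta\to L$ by $\Phi(t_1,\dots,t_m)=\projM\bigl(\sum_i t_i v_i\bigr)$; linear independence of $V$ guarantees that distinct convex combinations of the $v_i$ point in distinct directions, so $\Phi$ is a homeomorphism. Let $\widetilde Z=\Phi^{-1}(Z)\subset\Delta$, which is compact and connected. For any walk with step sequence $v_{I_1},v_{I_2},\dots$ starting at the origin, the partial sum is $c(k)=\sum_i n_i^{(k)} v_i$, where $n_i^{(k)}$ counts occurrences of $v_i$ in the first $k$ steps, and scale-invariance of $\projM$ gives $\projM(c(k))=\Phi\bigl(n_1^{(k)}/k,\dots,n_m^{(k)}/k\bigr)$. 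Linear independence of $V$ also forces $\|c(k)\|\to\infty$, so the boundary limit of the walk corresponds under $\Phi$ to the set of accumulation points of the normalized compositions $\bigl(n_1^{(k)}/k,\dots,n_m^{(k)}/k\bigr)$ in $\Delta$. It therefore suffices to build an infinite step sequence whose normalized compositions accumulate exactly on $\widetilde Z$.

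The key mechanism is the following observation. Appending $\ell$ steps to a walk of length $k$ consisting of approximately $s_i\ell$ copies of $v_i$, for a prescribed target $\vec s=(s_1,\dots,s_m)\in\Delta$, moves the normalized composition from its current value $\vec t=(n_1/k,\dots,n_m/k)$ to $\tfrac{k}{k+\ell}\vec t+\tfrac{\ell}{k+\ell}\vec s$. If those $\ell$ steps are interleaved in a round-robin fashion, every intermediate normalized composition stays within $O(1/k)$ of the straight line segment from $\vec t$ to $\vec s$. Taking $\ell$ large enough, one can approximate $\vec s$ to any desired precision. This is the steering lemma I would defer to Section~\ref{S:Lemmas}.

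The walk is constructed in phases indexed by $j=1,2,\dots$. Using compactness and connectedness of $\widetilde Z$, choose a sequence $z_{j,1},\dots,z_{j,N_j}$ in $\widetilde Z$ that is $1/j$-dense and satisfies $\|z_{j,i+1}-z_{j,i}\|<2/j$; such $\epsilon$-chains exist in any compact connected metric space. In phase $j$ I invoke the steering lemma repeatedly to drive the normalized composition to within $1/j$ of each $z_{j,i}$ in turn, taking the phase long enough that the $O(1/k)$ deviation in the lemma is $<1/j$. Throughout phase $j$ the normalized composition then stays within an $O(1/j)$-neighborhood of $\widetilde Z$, because the straight segments between consecutive $z_{j,i}$ have length $<2/j$ and lie $1/j$-close to $\widetilde Z$. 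Letting $j\to\infty$, every accumulation point must lie in $\widetilde Z$, and conversely every point of $\widetilde Z$ is approximated by some $z_{j,i}$ that is visited (to within $1/j$) infinitely often. Applying $\Phi$ yields a walk with boundary limit $Z$. I expect the main obstacle to be the quantitative control of the \emph{intermediate} positions in the steering lemma: unless the $\ell$ appended vectors are arranged so that every partial normalized composition hugs the $\vec t$-to-$\vec s$ segment, spurious accumulation points outside $\widetilde Z$ would appear in transit between targets. The round-robin ordering is what keeps the trajectory close to the segment; formalizing this and tracking the error terms across phase boundaries is precisely the work of the deferred technical lemmas.
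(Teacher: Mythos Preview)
Your approach is correct and genuinely different from the paper's. The paper never leaves the spherical simplex $L$: it first builds a continuous path $\gamma\colon[0,\infty)\to L$ whose tail closure $\bigcap_{T>0}\overline{\gamma([T,\infty))}$ equals $Z$ (using that neighborhoods of $Z$ in $L$ are path-connected), and then invokes two technical lemmas (Lemmas~\ref{L:directApprox} and~\ref{L:closeWalk}) to approximate $\gamma$ first by a \emph{$W$-directed path}---a concatenation of geodesic arcs each pointing toward a vertex of $L$---and then by the projection of a walk. The deferred work in Section~\ref{S:Lemmas} is largely spherical trigonometry (e.g.\ Lemma~\ref{L:getsClose}) needed to show that adding copies of $e_i$ to a vector really does push its projection monotonically toward the vertex $w_i$ along a geodesic of $L$.

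Your route sidesteps all of that by linearizing: the identity $\projM(c(k))=\Phi(n_1^{(k)}/k,\dots,n_m^{(k)}/k)$ converts the problem into controlling accumulation points of normalized count vectors in the flat simplex $\Delta$, where straight-line steering replaces the paper's $W$-directed paths, and $\epsilon$-chains in $\widetilde Z$ replace the paper's continuous loops. This is more elementary---no spherical law of cosines---and your diagnosis of the main hazard (spurious accumulation points from poorly ordered appended steps) is exactly right; the balanced/round-robin ordering making $\|m^{(j)}-j\vec s\|=O(1)$ is what the paper achieves implicitly via the minimality clauses in the proof of Lemma~\ref{L:closeWalk}. Two small points worth making explicit when you write it up: the boundary limit is determined by \emph{all} $t\to\infty$, not just integer times, so you should note that $\projM(c(t))$ for $t\in[k,k+1]$ lies within $O(1/k)$ of $\projM(c(k))$; and in transitioning between phases you should arrange that the first target of phase $j+1$ is within $O(1/j)$ of the last target of phase $j$, so the bridging segment also stays near $\widetilde Z$.
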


Nonsingular linear transformations induce homeomorphisms of Euclidean space and its boundary under the cone topology.  Thus, by applying a change-of-basis transformation and passing to a subspace we may assume that $V = \{e_1,...,e_n\}$.

\begin{definition}
A path $\gamma:[a,a+1] \rightarrow L$ is a {\em $W$-directed segment} if $\gamma$ is injective and its image is an initial subsegment of a geodesic segment $[\gamma(a),w]$ for some $w \in W$.
\end{definition}

\begin{definition}
A path $\gamma:[0,\infty) \rightarrow L$ or $\gamma:[0,a] \rightarrow L$ is a {\em $W$-directed path} if $a \in \mathbb{Z}$, and for each positive integer $k$ in the domain of $\gamma$, $\gamma \mid_{[k-1,k]}$ is either constant or a $W$-directed segment.
\end{definition}

The proof of Theorem \ref{T:TimpW} will rely on the following two lemmas.  The proofs of these lemmas are rather technical and are deferred to the final section.

\begin{lemma}\label{L:directApprox}
Let $\gamma:[0,a] \rightarrow L$ be any path, and let $\epsilon > 0$.  Then there is is a $W$-directed path $\gamma':[0,a']\rightarrow L$ such that $d_H(\gamma,\gamma') \leq \epsilon$.  $\gamma'$ may be chosen so that $\gamma'(0)$ is any point in $B_\epsilon(\gamma(0))$, and so that $\gamma'(a') \in B_\frac{\epsilon}{2}(\gamma(a))$.
\end{lemma}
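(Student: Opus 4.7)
The plan is to approximate $\gamma$ by many short $W$-directed zigzags. By uniform continuity of $\gamma$, I would first choose a partition $0 = t_0 < t_1 < \cdots < t_N = a$ of $[0,a]$ fine enough that the image of each $\gamma|_{[t_k, t_{k+1}]}$ has diameter less than $\epsilon/4$, and set $p_k = \gamma(t_k)$. The path $\gamma'$ will be built inductively as a concatenation of blocks $\gamma'_k$ of $W$-directed segments, where the block $\gamma'_k$ starts at a point $p'_k$ within some small distance $\delta \ll \epsilon$ of $p_k$ and ends at some $p'_{k+1}$ within $\delta$ of $p_{k+1}$. The initial point $p'_0$ is the prescribed point in $B_\epsilon(\gamma(0))$.

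The key observation for each inductive step is that at every point $p \in L$, the initial tangent vectors $\mathbf{u}_i$ of the geodesic segments $[p, w_i]$, for $w_i \in W$, positively span the tangent cone to $L$ at $p$, because $L$ is the convex hull of $W$. Hence any sufficiently small inward displacement from $p'_k$ admits a nonnegative decomposition of the form $\sum_i \lambda_i \mathbf{u}_i$. I would realize this decomposition as a concatenation of $W$-directed segments by subdividing each $\lambda_i$ into $K$ equal pieces and interleaving small moves toward the various vertices $w_i$; as $K \to \infty$ the resulting zigzag converges uniformly to the geodesic from $p'_k$ to its endpoint, which can be arranged to be within $\delta$ of $p_{k+1}$. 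Each block $\gamma'_k$ therefore remains within $\delta + (\epsilon/4)$ of $\gamma|_{[t_k, t_{k+1}]}$, so the full concatenation $\gamma'$ lies in the $\epsilon$-neighborhood of $\gamma$ for $\delta$ small enough.

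The main obstacle is obtaining uniform control over the positive-span coefficients $\lambda_i$ as the base point $p$ varies over $L$. On compact subsets of the interior of $L$ the coefficients are bounded by continuity, but care is needed when $p$ lies on a proper face of $L$, where the decomposition may become degenerate. I would address this by exploiting the fact that at any $p \in L$ the tangent cone of $L$ at $p$ is positively spanned by $\{\mathbf{u}_i\}$ with coefficients bounded by a constant depending only on the combinatorial structure of the simplex $L$, since $L$ is itself the convex hull of the $w_i$. Alternatively, one can first take a single small preliminary $W$-directed step toward a well-chosen vertex to push $p'_k$ a controlled distance into the relevant relative interior before beginning the zigzag.

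Finally, choosing $\delta$ small enough that $B_\delta(p_N) \subset B_{\epsilon/2}(\gamma(a))$ ensures the required endpoint condition $\gamma'(a') \in B_{\epsilon/2}(\gamma(a))$. The total domain length $a'$ is automatically a nonnegative integer, since each block is a finite concatenation of unit-domain $W$-directed segments, verifying the remaining conditions in the definition of a $W$-directed path.
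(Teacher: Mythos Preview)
Your approach is sound but takes a genuinely different route from the paper. The paper first reduces from the spherical simplex $L$ to the standard Euclidean simplex $K$ (radial projection carries $W$-directed paths to $W$-directed paths with bounded metric distortion), and then proves a single-segment version, Lemma~\ref{L:lineApprox}: for $q\in K$ and a target $q'$ in the interior of a face $K'$, there is a $W$-directed path from $q$ ending near $q'$ and staying near $[q,q']$. This is proved by induction on $\dim K'$: one picks a vertex $v$ of $K'$, uses the inductive hypothesis to head toward the opposite face $K''$, then corrects with a $v$-directed segment, controlling everything in the $2$-plane through $q,q',v$; the process terminates because each cycle decreases the distance to the line $[v,q'']$ by a definite amount. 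Lemma~\ref{L:directApprox} then follows by PL-approximating $\gamma$ and chaining these segment approximations. Your positive-spanning/zigzag idea replaces the dimension induction with a single local principle---for any $p,q$ in the Euclidean simplex one has $q-p=\sum_i\mu_i(w_i-p)$ with $\mu_i\ge 0$, $\sum\mu_i=1$, so interleaved short moves toward the $w_i$ realize the displacement---which is more conceptual and sidesteps the planar geometry of the paper's induction, at the cost of a Trotter-type convergence estimate for the zigzag.

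Two points to tighten. First, make the reduction to the Euclidean simplex explicit before invoking tangent vectors: on the sphere the claim ``as $K\to\infty$ the zigzag converges uniformly to the geodesic'' is not literally true (the limit is the integral curve of $\sum_i\lambda_i\mathbf u_i(x)$, which is not straight), whereas in the flat simplex the deviation of the $K$-step zigzag from the segment $[p,q]$ is an elementary $O(\lVert q-p\rVert^2)+O(1/K)$ estimate, which is what you actually need. Second, your first block starts at the prescribed $p'_0\in B_\epsilon(\gamma(0))$, not within $\delta$ of $p_0$, so the uniform bound $\delta+\epsilon/4$ does not cover it; the paper handles this by a short preliminary $W$-directed path from $p'_0$ into $B_{\epsilon/2}(\gamma(0))$ before beginning the main construction, and you should do the same.
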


\begin{lemma} \label{L:closeWalk}
Let $\gamma:[0,a] \rightarrow L$ be a $W$-directed path, and let $\epsilon_1,\epsilon_2 > 0$.  Then there is an $R>0$ such that, for any $x \in E$ with $\lVert x \rVert \geq R$ and $d\bigl(\projM(x), \gamma(0)\bigr) < \frac{\epsilon_1}{2}$, there is a finite walk $c$ in $E$ over $V$, starting at $x$, such that $d_H\bigl(\projM(c),\gamma\bigr) \leq \epsilon_1$.  The walk $c$ may be chosen to have arbitrarily long length, and so that the projection of its ending point is within $\epsilon_2$ of $\gamma(a)$.
\end{lemma}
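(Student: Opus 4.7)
The plan is to build the walk $c$ piece by piece, matching each $W$-directed segment of $\gamma$ with a subwalk that repeatedly appends the appropriate $e_i$. The underlying geometric principle is that when $\|x\|$ is large, adding a single vector from $V$ perturbs $\projM(x)$ by an angle of order $1/\|x\|$, and the sequence $\projM(x), \projM(x+e_i), \projM(x+2e_i), \ldots$ traces, in shrinking discrete steps, the geodesic arc in $M$ from $\projM(x)$ to $w_i = \projM(e_i)$.

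First I would establish a quantitative step-size estimate: given $\delta>0$, there is $R_\delta$ so that $\|x\|\geq R_\delta$ implies $d\bigl(\projM(x),\projM(x+e_i)\bigr)<\delta$ for every $i$. Choose $\delta$ much smaller than $\epsilon_1$, $\epsilon_2$, and than the lengths of the nontrivial $W$-directed segments of $\gamma$, and set $R=R_\delta$. Since $\projM(y)$ is to be kept near $L$, which consists of directions with nonnegative coordinates, the walker's norm is essentially nondecreasing as $e_i$'s are appended, so the step-size estimate persists throughout the construction.

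Next I would iterate across the segments of $\gamma$. At the start of segment $k$, the current walker $y$ satisfies $d(\projM(y),\gamma(k-1))<\delta$; if $\gamma|_{[k-1,k]}$ is constant, do nothing, and otherwise, if it is a $W$-directed segment toward $w_{i_k}$ ending at $\gamma(k)$, append $e_{i_k}$ repeatedly until the first step $N_k$ at which $\projM(y+N_k e_{i_k})$ comes within $\delta$ of $\gamma(k)$. The intermediate projections form a $\delta$-net in the geodesic arc in $M$ from $\projM(y)$ to $\projM(y+N_ke_{i_k})$, and this arc lies within $O(\delta)$ of $\gamma|_{[k-1,k]}$ in Hausdorff distance, since both run from approximately $\gamma(k-1)$ to approximately $\gamma(k)$ along geodesics aimed at $w_{i_k}$. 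Choosing $\delta$ small enough and concatenating produces a finite walk with $d_H(\projM(c),\gamma)\leq\epsilon_1$ whose terminal projection is within $\delta$ of $\gamma(a)$.

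To guarantee arbitrary length, write $\gamma(a)=\projM\bigl(\sum_i\lambda_i e_i\bigr)$ with $\lambda_i\geq 0$ and extend the walk by an arbitrarily long round-robin sequence of $e_i$'s with frequencies $\lambda_i$; the direction of the walker then converges to $\sum_i\lambda_i e_i$, keeping the projection uniformly close to $\gamma(a)$ throughout the extension. The principal technical obstacle is the quantitative control in the iteration: one must verify that the stopping rule lands on an integer without overshooting when a segment is very short, and one must bound the discrepancy between the geodesic from $\projM(y)$ to $w_{i_k}$, which may stray outside $L$ since $\projM(y)\notin L$ in general, and the true segment $\gamma|_{[k-1,k]}\subset L$. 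Both issues are handled by forcing $\delta$ below every length scale appearing in $\gamma$, which we may do by taking $R$ sufficiently large.
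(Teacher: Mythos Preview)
Your plan is essentially the paper's own: follow each $W$-directed segment of $\gamma$ by appending copies of the matching $e_{i_k}$, using the $O(1/\|x\|)$ step-size bound (the paper's Proposition~\ref{P:projectSeg}) to control the projection, then extend indefinitely while holding the projection near $\gamma(a)$.

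Two points of comparison are worth recording. First, your claim that after each segment the terminal projection lands within $\delta$ of $\gamma(k)$ is slightly optimistic; the spherical comparison you invoke (``geodesics aimed at the same vertex stay close'') only gives that the error does not grow by more than one step size, so in general the discrepancy can accumulate by $\delta$ per segment. The paper handles this exactly that way, via the spherical inequality of Lemma~\ref{L:getsClose}, and therefore takes $\delta=\epsilon_1/(4N)$ with $N$ the number of segments; your sketch should make the same adjustment rather than trying to bound $\delta$ by the segment lengths. Second, for ``arbitrarily long length'' you propose a direct frequency/round-robin argument, whereas the paper isolates this as a separate statement (Lemma~\ref{L:inPlace}) proved by induction on $\dim L$, and then also uses Lemma~\ref{L:lineApprox} plus a second pass of the tracking argument to bring the endpoint within $\epsilon_2$. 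Your route is more elementary here and works, but you should note that it requires the step size to keep shrinking (which it does, since $\|x\|$ is nondecreasing along a walk over $V=\{e_1,\dots,e_n\}$), and that irrational coefficient vectors $\lambda$ need rational approximation of the frequencies.
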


We now prove some basic propositions.

\begin{proposition}\label{P:boundType}
Let $c:[0,\infty) \rightarrow E$ be a path such that $\lim_{t\rightarrow \infty}\lVert c(t) \rVert = \infty$.  Then the boundary limit of $c$ is
\[
\left\{\lim_{k\rightarrow \infty}\projM\bigl(c(a_k)\bigr) \Big\vert a_k \in \mathbb{R}^+, \lim_{k\rightarrow \infty} a_k = \infty \right\}.
\]
\end{proposition}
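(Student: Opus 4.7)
The plan is to prove both inclusions of the set equality by unwinding the cone topology on $E \cup \partial E$. In the Euclidean half-space $E$, convergence of a sequence $(x_k)$ to a boundary point $\xi$ in the cone topology is equivalent to the two conditions $\lVert x_k \rVert \to \infty$ and $\projM(x_k) \to \xi$ in $M$. I would record this equivalent characterization once and use it throughout; it is the only nontrivial input and is standard in the Euclidean setting, where $\projM(x) = x/\lVert x \rVert$ and the cone topology restricted to $\partial E$ agrees with the sphere topology on $M$.

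For the inclusion $\supseteq$, suppose $a_k \to \infty$ and $\projM(c(a_k)) \to \xi$. The hypothesis $\lim_{t \to \infty} \lVert c(t) \rVert = \infty$ forces $\lVert c(a_k) \rVert \to \infty$, so by the characterization above, $c(a_k) \to \xi$ in the cone topology, and hence $\xi$ belongs to the boundary limit of $c$.

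For the reverse inclusion, suppose $\xi$ lies in the boundary limit. Then there is a sequence of points $x_k \in c([0,\infty))$ with $x_k \to \xi$ in the cone topology; write $x_k = c(a_k)$ for some $a_k \geq 0$. The characterization immediately gives $\projM(x_k) \to \xi$, as required, provided we can arrange $a_k \to \infty$. For this, note that $\lVert x_k \rVert \to \infty$ and that continuity of $c$ together with $\lim_{t\to\infty}\lVert c(t)\rVert = \infty$ makes $\lVert c\rVert$ bounded on every bounded subinterval of $[0,\infty)$. A bounded subsequence of $(a_k)$ would therefore produce a bounded subsequence of $\lVert x_k \rVert$, contradicting $\lVert x_k \rVert \to \infty$, so $a_k \to \infty$.

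The main obstacle is essentially verbal rather than mathematical: one must carefully match the topological definition of the boundary limit with the concrete description given by sequences $a_k \to \infty$. The Euclidean characterization of cone-topology convergence dissolves the problem into two short verifications, and no delicate CAT(0) geometry is needed beyond the fact that, on a Euclidean half-space, ``converging to $\xi$ at infinity'' means exactly ``norm diverging and direction tending to $\xi$.''
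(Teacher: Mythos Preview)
Your proof is correct and follows essentially the same approach as the paper: both establish the two inclusions by translating cone-topology convergence into the pair of conditions ``norm diverges'' and ``direction converges.'' The only difference is packaging---you isolate this characterization once as a lemma and apply it twice, while the paper unwinds the basic open sets $N(q,R,\epsilon)$ directly in each direction; your version is also more explicit about why $a_k \to \infty$ (boundedness of a continuous map on compact intervals), a step the paper passes over quickly.
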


\begin{proof}
Let $q$ be a point in the boundary limit of $c$.  Then there is a sequence of points $c(a_1), c(a_2),...$ which converge to $q$ in the cone topology.  Recall that a basis of open sets around $q$ are sets of the form $N(q,R,\epsilon) = \{x \in E \mid d(x, \overrightarrow{0}) > R, d(\rho_R(x),\rho_R(q)) < \epsilon\}$, where $\rho_R$ is radial projection to the closed ball of radius $R$ centered at $\overrightarrow{0}$.

For any $R$, all but finitely many of the $c(a_k)$ have $\lVert c(a_k) \rVert > R$.  It follows that $\lim_{k\rightarrow \infty}a_k = \infty$.  Since $d\bigl(\rho_R(c(a_k)),\rho_R(q)\bigr) < \epsilon$, the angle between $c(a_k)$ and $q$ is less than $\sin^{-1}(\frac{\epsilon}{R})$, thus the angles between $c(a_k)$ and $q$ approach zero.  Therefore, $\lim_{k\rightarrow \infty}\projM\left(c(a_k)\right) = q$.

Conversely, suppose $q = \lim_{k\rightarrow\infty}\projM\left(c(a_k)\right)$, with $a_k \rightarrow \infty$.  Then the angle between $q$ and $c(a_k)$ approaches 0 as $k \rightarrow \infty$, and thus for any $R$ and $\epsilon$, we can find $K$ such that $d\bigl(\rho_R(c(a_k)),\rho_R(q)\bigr) < \epsilon$ for all $k > K$.  Since the $a_k$ approach $\infty$ we can also choose $K$ so that $\lVert c(a_k) \rVert > R$ for $k > K$.  Thus $q$ is the limit of the points $c(a_k)$ in the cone topology, and the proposition is proved.
\end{proof}

\begin{lemma}\label{L:matchBound}
Suppose $\gamma:[0,\infty) \rightarrow L$ is any path.  Then there is a walk $c:[0,\infty)\rightarrow E$ over $V = \{e_1,...,e_n\}$ such that $c(0) = \overrightarrow{0}$ and the boundary limit of $c$ is $$\bigcap_{T>0}\overline{\gamma\left([T,\infty)\right)}.$$
\end{lemma}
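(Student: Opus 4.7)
The plan is to realize $c$ as an infinite concatenation of finite walks, each shadowing a progressively longer portion of $\gamma$ with progressively finer accuracy, by alternating applications of Lemmas~\ref{L:directApprox} and~\ref{L:closeWalk}. Fix an increasing sequence $0 = t_0 < t_1 < \cdots$ with $t_k \to \infty$ and tolerances $\epsilon_k \to 0$, and recursively build, for each $k \geq 1$, a $W$-directed path $\gamma_k' \colon [0,a_k] \to L$ with $d_H\bigl(\gamma_k', \gamma\mid_{[t_{k-1},t_k]}\bigr) \leq \epsilon_k$, together with a finite walk $c_k$ in $E$ over $V$ starting at the terminal point of $c_{k-1}$ and satisfying $d_H\bigl(\projM(c_k),\gamma_k'\bigr) \leq \epsilon_k$. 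The desired $c$ is the concatenation of the $c_k$.

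At stage $k$, first apply Lemma~\ref{L:directApprox} to $\gamma\mid_{[t_{k-1},t_k]}$ with tolerance $\epsilon_k$, using the freedom in that lemma to choose $\gamma_k'(0)$ to be the projection of the terminal point of $c_{k-1}$; then apply Lemma~\ref{L:closeWalk} to $\gamma_k'$ with $\epsilon_1 = \epsilon_k$ and a sufficiently small $\epsilon_2$, starting from that same terminal point $x$. Two preconditions require care: $\lVert x \rVert$ must exceed the threshold $R$ produced by Lemma~\ref{L:closeWalk} from $\gamma_k'$, and $d\bigl(\projM(x),\gamma_k'(0)\bigr) < \epsilon_k/2$. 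Both are secured by using the freedom of Lemma~\ref{L:closeWalk} at stage $k-1$ to extend $c_{k-1}$ arbitrarily far, and to make its terminal projection arbitrarily close to the chosen $\gamma_k'(0)$, after the stage $k$ data have been named. Requiring in addition that each $c_k$ have length at least $k$ forces $\lVert c(s)\rVert \to \infty$ along the concatenation.

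It remains to identify the boundary limit of $c$ with $Z^\ast = \bigcap_{T>0}\overline{\gamma\left([T,\infty)\right)}$. By Proposition~\ref{P:boundType}, a point $q$ lies in the boundary limit of $c$ iff $q = \lim_j \projM(c(s_j))$ for some $s_j\to\infty$, and $q\in Z^\ast$ iff $q = \lim_j \gamma(\tau_j)$ for some $\tau_j\to\infty$. The two Hausdorff estimates of stage $k$ combine to give, for every walk time $s_j$ lying in stage $k(j)$, a parameter $\tau_j\in[t_{k(j)-1},t_{k(j)}]$ with $d\bigl(\projM(c(s_j)),\gamma(\tau_j)\bigr)\leq 2\epsilon_{k(j)}$, and conversely for every $\tau_j$ a corresponding $s_j$ with the same bound; since $s_j\to\infty$ forces $k(j)\to\infty$, this pairing converts sequences witnessing membership in one set into sequences witnessing membership in the other.

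The main obstacle is the bookkeeping of the recursion. The threshold $R$ supplied by Lemma~\ref{L:closeWalk} at stage $k$ depends on $\gamma_k'$, which is only specified when stage $k$ begins, yet the walk at stage $k$ must start at a point of norm at least $R$; the precondition on the starting projection links consecutive stages in the same way. These dependencies are threaded by choosing $\gamma_k'$ and the associated $R$ first at stage $k$, and only then revisiting stage $k-1$ to extend $c_{k-1}$ far enough and in the right direction before committing to $c_k$. Once this ordering is set up correctly, the Hausdorff approximations glue directly into the desired equality of the boundary limit of $c$ with $Z^\ast$.
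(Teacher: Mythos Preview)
Your proof is correct and follows the same strategy as the paper: approximate successive pieces of $\gamma$ by $W$-directed paths via Lemma~\ref{L:directApprox}, realize each by a finite walk via Lemma~\ref{L:closeWalk}, concatenate, and identify the boundary limit via Proposition~\ref{P:boundType}. The paper avoids your revisiting step by first fixing all the $\gamma_i'$ (chained end-to-start, independently of the walks) and hence all thresholds $R_i$ before constructing any $c_i$, and it also explicitly prepends an arbitrary walk from $\vec{0}$ to a point of norm exceeding $R_1$ with projection near $\gamma_1'(0)$, a step your outline leaves implicit.
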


\begin{proof}
Express $\gamma$ as a concatenation of paths $\gamma_1\colon [0,1]\rightarrow L, \gamma_2\colon[1,2]\rightarrow L$, etc.  By Lemma \ref{L:directApprox}, for each $\gamma_i$, we can choose a directed path $\gamma_i'$ such that $d_H(\gamma_i,\gamma_i') \leq \frac{1}{2^i}$, and we can choose these $\gamma_i'$ so that the starting point of $\gamma_i'$ is the ending point of $\gamma_{i-1}'$.

Letting $\epsilon_i = \frac{1}{2^i}$, let $R_i$ be the associated value of $R$ needed to approximate $\gamma_i$ with Lemma \ref{L:closeWalk}.  Choose any point $v \in E$ with positive integer coordinates such that $\lVert v \rVert > R_1$ and $d\bigl(\projM(v),\gamma_1(0)\bigr) < \frac{\epsilon_1}{2}$.  Then by Lemma \ref{L:closeWalk}, there is a walk $c_1$ starting at $v$ such that $d_H(\projM(c_1),\gamma_1') \leq \epsilon_1$.  We can choose $c_1$ to have arbitrary length, in particular we can choose $c_1$ so that its ending point has distance greater than $R_2$ from the origin.  We can also choose $c_1$ so that its ending point, when projected to $M$, has distance less than $\frac{\epsilon_2}{2}$ from $\gamma_2(0)$.

Lemma \ref{L:closeWalk} then says there is a walk $c_2$, starting from the endpoint of $c_1$, ending at a point of distance at least $R_3$ from the origin, and such  that $d_H(\projM(c_2),\gamma_2') \leq \epsilon_2$.  We may choose $c_2$ so that its ending point, when projected to $M$, has distance less than $\frac{\epsilon_3}{2}$ from $\gamma_3(0)$.

Continuing in this way, we construct a sequence of walks $c_1,c_2,c_3,...$, each ending where the last started, such that $d_H(c_i,\gamma_i') \leq \epsilon_i$.  Concatenating all these walks together, and concatenating any walk from the origin to $v$ at the beginning gives us a walk $c \colon [0,\infty) \rightarrow E$.

By Proposition \ref{P:boundType}, the boundary limit of $c$ consists precisely of limits of sequences of the form $\projM(c(a_k))$, with $a_k \rightarrow \infty$.  Given such a sequence, for each $k$, we can choose a point $q_k$ on some $\gamma_{i_k}'$ such that $i_k \rightarrow \infty$ and $d(\projM(c(a_k)), q_k) \rightarrow 0$ as $k \rightarrow \infty$.  We can then choose $b_1,b_2,\dots \in \mathbb{R}$ such that $b_k \rightarrow \infty$, and so that $d(\gamma(b_k),q_k) \rightarrow 0$.  Thus $d(\projM(c(a_k)), \gamma(b_k)) \rightarrow 0$ as $k \rightarrow \infty$.

The limit of a sequence $\gamma(b_k)$ with $b_k \rightarrow \infty$ is precisely a point in $\bigcap_{T>0}\overline{\gamma\left([T,\infty)\right)}$.  Thus we have that the boundary limit is contained in this intersection.  But for any sequence $\gamma(b_k)$, we can choose $q_k \in \gamma_{i_k}'$ with $i_k \rightarrow \infty$ and $d(\gamma(b_k), q_k) \rightarrow 0$.  Then we can choose a sequence $a_k \rightarrow \infty$ such that $d(\projM(c(a_k)), q_k) \rightarrow 0$, and so the sequence $\projM(c(a_k))$ converges to the same point as $\gamma(b_k)$.  This shows the reverse inclusion, proving the proposition.
\end{proof}

We are now in a position to prove Theorem \ref{T:TimpW}.

\begin{proof}[Proof of Theorem \ref{T:TimpW}]
Fix any point $q \in Z$.  For each positive integer $k$, let $S_k$ be a finite subset of $Z$ such that $Z \subset N_\frac{1}{k}(S_k)$.  Since $Z$ is a connected subset of a simplex, any open neighborhood of $Z$ is path-connected.  Thus we can choose paths $\gamma_k \colon [0,1]\rightarrow N_\frac{1}{k}(Z)$ such that $\gamma_k(0) = \gamma_k(1) = q$ and such that $S_k \subset \gamma_k$.  Concatenating the paths $\gamma_k$ gives a path $\gamma\colon [0,\infty) \rightarrow L$.

Note that any point of $Z$ lies in the closure of $\gamma$, since the $S_k$ get arbitrarily close to every point of $Z$.  Indeed, any point of $Z$ lies in $\overline{\gamma\left([T,\infty)\right)}$ for any $T \in \mathbb{R}$.  For $k \in \mathbb{Z}$, we have $\gamma\left([k,\infty)\right) \subset N_{\frac{1}{k}}(Z)$, and thus $Z = \bigcap_{T>0}\overline{\gamma\left([T,\infty)\right)}$.

By Lemma \ref{L:matchBound}, there is a walk over $V$ starting at the origin which has $\bigcap_{T>0}\overline{\gamma\left([T,\infty)\right)} = Z$ as its boundary limit.
\end{proof}

\begin{proof}[Proof of Theorem \ref{T:impliesMain}]
For $i = 1,...,n$, let $v_i = e_0 + e_i$.  Let $V = \{v_1,...,v_n\}$.  Let $L$ be the spherical $(n-1)$-simplex obtained by taking the convex hull of $\projM(V)$.  Embed $Z$ into $L$.  By Theorem \ref{T:TimpW}, there is a walk $c$ over $V$ starting at the origin whose boundary limit is $Z$.  Let $v_{I_1},v_{I_2},...$ be the sequence of vectors in $V$ associated to the walk $c$.

Take the geodesic ray in $T$ which starts at $p$ and passes through $g_{I_1}\cdot p, g_{I_1}g_{I_2} \cdot p$, etc., and call this ray $c'$.  $f(c') \subset c' \times \mathbb{R}^n$ by Proposition \ref{P:projcommutes}.  By Lemma \ref{P:cWalk}, $f(c') = c$ in the coordinates of $c' \times \mathbb{R}^n$.  Since $c' \times \mathbb{R}^n$ is a closed, convex subset of $X$, its boundary is embedded in $\partial X$.  Thus, the boundary limit of $f(c')$ is the boundary limit of $c$ and is homeomorphic to $Z$.
\end{proof}

As mentioned, Theorem \ref{T:impliesMain} implies the main theorem.

\section{Remarks}\label{S:Remarks}

It is interesting to note that the construction of the group $G$, the space $X$, and the map $f$ depended only on the dimension of the space $Z$.  This gives us that, for a fixed $n$, every compact connected subspace of $\mathbb{R}^{n-1}$ occurs as a boundary limit of $f(c)$ for some geodesic $c$ in $X$.  While these boundary limits have diverse homeomorphism types, all of their inclusions into $\partial X$ are nullhomotopic.  This leaves open whether the boundary limit of the image of a geodesic ray under a $G$-equivariant quasi-isometry is always homotopic to a point in the boundary.

In the proof of the main theorem, only positive geodesics were considered for simplicity's sake.  However, the argument of \ref{P:cWalk} extends to geodesics in $T \subset X$ which do not represent positive words.  Whenever an inverse of a generator $g_i^{-1}$ occurs, the geodesic still passes through some $g \cdot E_i$, but $c(k+1)-c(k)$ is now $-e_0$ in the coordinates of $g \cdot E_i$.  This means that a direction in the coordinates of $c \times \mathbb{R}^n$ is the reflection across the $e_0 = 0$ hyperplane of the direction in the coordinates of $g \cdot E_i$.  So $f$ sends the vector $c(k+1) - c(k)$ to the reflection of $f_i(-e_0)$ across the hyperplane $e_0 = 0$.  Since $f(-e_0) = -e_0 - e_i$, its reflection is $e_0 - e_i$.  Calling such a vector $v'_i$, we see that the image of such a geodesic is then just a walk in $\{v_1,...,v_n\} \cup \{v'_1,...,v'_n\}$.

The geodesics we have constructed are all of the form $c' \times \{0\} \subset T \times \mathbb{R}^n = X$, for some geodesic $c'$ in the tree $T$.  One may also ask what happens to the images of geodesics $c$ which do not stay in the tree $T$.  We answer this with the following proposition:

\begin{proposition} Let $c$ be any geodesic in $X = T \times \mathbb{R}^n$.  Let $c' = \pi_T(c)$, the projection of $c$ to the $T$ factor.  If $c'$ is not constant, then up to reparameterization $c'$ is a geodesic, and the boundary limit of $f(c)$ is homeomorphic to the boundary limit of $f(c')$.
\end{proposition}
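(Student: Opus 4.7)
The plan is to split $c$ using the product structure of $X=T\times\mathbb{R}^n$, show that $f$ commutes with translations in the $\mathbb{R}^n$ factor, and thereby realize $f(c)$ as an affine shear of $f(c')$ inside a single Euclidean half-space; the shear then extends to a homeomorphism of the visual boundary.

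First, I would use that $X$ is a product of CAT(0) spaces to write $c(t)=(c'(t),c''(t))$ with $c'$ at constant speed $a\geq 0$ in $T$ and $c''$ a straight line at speed $b\geq 0$ in $\mathbb{R}^n$, with $a^2+b^2=1$. The hypothesis that $c'$ is non-constant forces $a>0$, so $c'$ is a geodesic up to reparametrization; write $c''(t)=c''(0)+btw$ for some $w\in\mathbb{R}^n$. Next I would establish the key commutativity $f(x+(0,v))=f(x)+(0,v)$ for all $x\in X$ and $v\in\mathbb{R}^n$. For $v\in\mathbb{Z}^n$ this is immediate from $G$-equivariance, since $\{1\}\times\mathbb{Z}^n$ is fixed pointwise by $\Phi$ and therefore acts identically under $\cdot$ and $\circ$, both as the $(0,v)$-translation. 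To promote the identity from $\mathbb{Z}^n$ to $\mathbb{R}^n$, I would restrict to a flat $g\cdot E_i$, where $f$ is affine by Proposition~\ref{P:affine}; the linear part there fixes a full-rank lattice of vectors $(0,v)$ and hence every $(0,v)\in\{0\}\times\mathbb{R}^n$. Since the flats cover $X$ and each is preserved by $\mathbb{R}^n$-translations, the identity passes to all of $X$.

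From this commutativity I would deduce $f(c(t))=f(c'(t))+(0,c''(t))$, where $c'(t)$ is viewed as a point of $T\times\{0\}\subset X$. Both $f(c)$ and $f(c')$ then lie in the closed convex Euclidean half-space $E':=c'\times\mathbb{R}^n$, whose visual boundary $M'$ embeds as a closed hemisphere in $\partial X$. Coordinatizing $E'$ as $[0,\infty)\times\mathbb{R}^n$ with $s$ the arclength along $c'$, I would define the affine map $\Sigma\colon E'\to E'$ by $\Sigma(s,y)=(s,\,y+c''(0)+(b/a)sw)$. Since $f$ preserves the $T$-coordinate (Proposition~\ref{P:projcommutes}), $f(c'(t))$ has $s$-coordinate $at$, and a direct computation gives $f(c(t))=\Sigma(f(c'(t)))$ for all $t$, so $f(c)=\Sigma\circ f(c')$ as parametrized paths.

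Finally, $\Sigma$ has nonsingular linear part preserving the half-space $s\geq 0$, so it extends to a self-homeomorphism $\bar\Sigma$ of $E'\cup M'$ in the cone topology, restricting to a self-homeomorphism of the hemisphere $M'$. Because $E'$ is closed and convex in $X$, its cone compactification embeds into $X\cup\partial X$, so the boundary limit of $f(c)$ in $\partial X$ equals $\bar\Sigma$ applied to the boundary limit of $f(c')$, and the two are homeomorphic via $\bar\Sigma|_{M'}$. The hard part will be the commutativity step above; it ultimately rests on the linear-algebra observation that an affine map commuting with a full-rank lattice of translations commutes with every translation in the spanned subspace.
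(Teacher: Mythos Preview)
Your argument is correct and takes a genuinely different route from the paper. The paper does not isolate the translation-commutativity $f\bigl(x+(0,v)\bigr)=f(x)+(0,v)$; instead it reruns the walk analysis of Lemma~\ref{P:cWalk} for the tilted ray $c$. Writing the direction of $c$ in $c'\times\mathbb{R}^n$ as $v=e_0+u$ (scaled so the $e_0$-component is $1$), the paper observes that $f(c)$ is a walk over the vectors $f_i(v)=v_i+u$ with the very same index sequence $I_1,I_2,\ldots$ as $f(c')$, checks that the $f_i(v)$ remain linearly independent, and then invokes a linear change of basis $L\colon v_i\mapsto f_i(v)$ to carry $f(c')$ onto $f(c)$ and hence their boundary limits homeomorphically. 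Your affine shear $\Sigma$ has linear part $e_0\mapsto e_0+u$, $e_j\mapsto e_j$, so $\Sigma(v_i)=v_i+u=f_i(v)$: up to the translational part, your $\Sigma$ is one admissible choice of the paper's $L$. What your approach buys is a clean global statement about $f$ (commutation with the $\mathbb{R}^n$-factor) that sidesteps reproving the walk lemma, and it handles arbitrary basepoints transparently, whereas the paper's ``same argument as in Lemma~\ref{P:cWalk}'' is a bit informal about the starting point of $c$. What the paper's approach buys is economy: it reuses exactly the machinery already developed in Section~\ref{S:GeoIms} without introducing a new lemma.
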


\begin{proof}  $c'$ is a path in a tree which does not retrace itself, thus is a geodesic after reparameterization.  Note that $c$ must be contained in the Euclidean half-space $c' \times \mathbb{R}^n$, and as a geodesic ray it corresponds to some vector in this space.  This vector must have a strictly positive $e_0$ coordinate, otherwise $c'$ would be constant.  Let $v$ be the vector pointing in this direction, scaled so that the $e_0$ component of $v$ is 1.

The same argument as in the proof of Proposition \ref{P:cWalk} then shows that, in the coordinates of $c' \times \mathbb{R}^n$, $f(c)$ is a walk in the vectors $f_1(v),...,f_n(v)$.  Letting $v = u + e_0$, we then have that $f_i(v) = u + e_i + e_0$.  This means the vectors $f_i(v)$ are still linearly independent (one easily checks that adding $u + e_0$ to these vectors produces a basis of $\mathbb{R}^{n+1}$).  Thus, $f(c)$ is a walk in $f_1(v),...,f_n(v)$, and the sequence of these vectors is exactly the sequence of vectors $v_1,...,v_n$ taken by $f(c')$ with $v_i$ replaced by $f_i(v)$.

Let $L$ be a change-of-basis linear transformation of Euclidean space which takes $\{v_1,...,v_n\}$ to $\{f_1(v),...,f_n(n)\}$.  This extends to a homeomorphism of Euclidean space together with its boundary sphere.  $L$ also takes $f(c')$ to $f(c)$.  Thus, the boundary image of $f(c)$ is the image under $L$ of the boundary image of $f(c')$, and so the two are homeomorphic.

\end{proof}

\section{Directed paths in a simplex}\label{S:Lemmas}
In this section we will prove Lemmas \ref{L:directApprox} and \ref{L:closeWalk}.

\begin{proposition} It suffices to prove Lemma \ref{L:directApprox} for a standard Euclidean simplex $K$.
\end{proposition}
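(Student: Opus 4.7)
The plan is to produce a bi-Lipschitz homeomorphism from the spherical simplex $L$ to a standard Euclidean simplex $K$ that identifies the notion of $W$-directed segment in $L$ with the analogous notion (initial subsegments of Euclidean line segments to a vertex) in $K$. Once such an identification is in hand, Hausdorff $\epsilon$-approximations transfer from one setting to the other up to a multiplicative constant, which is absorbed by shrinking $\epsilon$.

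First I would set up the map. Since we have reduced to $V=\{e_1,\dots,e_n\}$, the set $W=\projM(V)$ consists of the unit vectors $e_1,\dots,e_n$ on $M$, and $L$ is the spherical convex hull of these vectors (i.e., the intersection of $M$ with the nonnegative orthant of the linear span of $V$). Let $H=\bigl\{(x_1,\dots,x_n) : \sum x_i = 1\bigr\}$ and let $K\subset H$ be the convex hull of $e_1,\dots,e_n$, which is a standard Euclidean $(n-1)$-simplex. Define $\pi \colon L \to K$ by radial projection from the origin, so $\pi(w) = w/(w_1+\cdots+w_n)$. Since every $w\in L$ lies in the open positive orthant (up to the boundary of $L$ itself), $\pi$ is well-defined, continuous, and a bijection with continuous inverse $\pi^{-1}(x)=x/\lVert x\rVert$. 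Both $L$ and $K$ are compact, and both $\pi$ and $\pi^{-1}$ are smooth on a neighborhood of these compact sets, so $\pi$ is bi-Lipschitz with some constant $C\geq 1$ when $L$ is equipped with the Tits (angle) metric and $K$ with the Euclidean metric.

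Next I would check that directed segments correspond. A $W$-directed segment in $L$ is an injective path on a unit interval whose image is an initial subsegment of the spherical geodesic $[\gamma(a),e_i]\subset M$ for some $i$. This geodesic arc lies in the great circle obtained by intersecting $M$ with the $2$-plane through $0$, $\gamma(a)$, and $e_i$. The radial projection sends this great circle arc to the intersection of that same $2$-plane with the affine hyperplane $H$, which is a Euclidean line, and in fact the arc from $\gamma(a)$ toward $e_i$ is carried to the straight segment from $\pi(\gamma(a))$ toward $e_i\in K$. Hence $\pi$ carries $W$-directed segments in $L$ bijectively (as sets, i.e., up to reparameterization) to initial subsegments of Euclidean line segments from a point of $K$ to a vertex of $K$; call these \emph{$K$-directed segments}. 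Concatenating, $\pi$ carries $W$-directed paths to $K$-directed paths and vice versa.

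Finally I would use the bi-Lipschitz estimate to transfer the conclusion. Suppose the Euclidean version of Lemma~\ref{L:directApprox} is known: given any path $\bar\gamma\colon [0,a]\to K$ and $\bar\epsilon>0$, there is a $K$-directed path $\bar\gamma'$ with $d_H(\bar\gamma,\bar\gamma')\leq\bar\epsilon$, with freedom in choosing the endpoints up to $\bar\epsilon$ and $\bar\epsilon/2$. Given $\gamma\colon[0,a]\to L$ and $\epsilon>0$, set $\bar\gamma=\pi\circ\gamma$ and $\bar\epsilon = \epsilon/C$. Apply the Euclidean statement to obtain $\bar\gamma'$, then set $\gamma' = \pi^{-1}\circ\bar\gamma'$; the bi-Lipschitz bound converts the Hausdorff-$\bar\epsilon$ estimate in $K$ back into a Hausdorff-$\epsilon$ estimate in $L$, and the endpoint-control conditions translate similarly (after matching the constant). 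The main obstacle is ensuring that the definition of $W$-directed path transfers cleanly, which amounts to checking the correspondence of geodesics-to-a-vertex described above and verifying that the endpoint-freedom conditions survive the bi-Lipschitz change of metric; once these are in place the reduction is essentially formal.
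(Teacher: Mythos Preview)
Your proposal is correct and follows essentially the same approach as the paper: both arguments use the radial projection between the spherical simplex $L$ and the Euclidean simplex $K$, observe that it carries vertices to vertices and geodesics to geodesics (hence $W$-directed segments to $K$-directed segments), and then use a Lipschitz bound on the metric distortion to transfer the Hausdorff approximation. The only cosmetic difference is that the paper computes an explicit one-sided Lipschitz constant $2\pi\sqrt{n}$ for the projection $K\to L$, whereas you invoke smoothness and compactness to get a bi-Lipschitz constant $C$; either suffices for the reduction.
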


\begin{proof} The projection from the standard Euclidean simplex to the corresponding spherical simplex sends vertices to vertices and geodesics to geodesics.  Thus a directed path on $K$ will project to a directed path on the spherical simplex $L$.

Furthermore, since each point of $K$ is at least distance $\frac{1}{\sqrt{n}}$ from the origin, points on $K$ of distance $d$ apart have angle no more than $2\sin^{-1}\left(2d\sqrt{n}\right) \leq 2\pi d\sqrt{n}$.  Thus, projection induces a bounded distortion of distances, and so if we choose a directed path approximating a path within $\frac{\epsilon}{2\pi\sqrt{n}}$ in $K$, then the projection will approximate the projected path within $\epsilon$ in $L$.
\end{proof}

In light of the above proposition, we will proceed to prove Lemma \ref{L:directApprox} for a Euclidean simplex $K$.  We will still refer to the vertex set of $K$ as $W$.

\begin{lemma} \label{L:lineApprox}
Let $q \in K$.  Let $K'$ be a face of $K$, and let $q'$ lie in the interior of $K'$.  Then for any $\epsilon > 0$, there is a $W$-directed path $\gamma$ starting at $q$, ending in $B_\epsilon(q')$, such that $\gamma \subset N_\epsilon\bigl([q, q']\bigr)$.
\end{lemma}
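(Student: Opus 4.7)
The plan is to approximate the straight segment $[q,q']$ by a $W$-directed path built from many short directed segments that cycle through the vertices of the face $K'$ containing $q'$, with step sizes weighted by the barycentric coordinates of $q'$.

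Setup and construction: Let $w_1,\dots,w_m$ be the vertices of $K'$. Since $q'$ lies in the interior of $K'$, we may write $q' = \sum_{i=1}^m \lambda_i w_i$ with $\lambda_i > 0$ and $\sum_i \lambda_i = 1$. Fix a small parameter $\delta>0$ to be chosen later, and define a sequence $p_0,p_1,p_2,\dots$ with $p_0:=q$ as follows. Given $p_k$, set $p_k^{(0)}:=p_k$, and for $i=1,\dots,m$ put
\[
p_k^{(i)} \;:=\; (1-\lambda_i\delta)\,p_k^{(i-1)} \;+\; \lambda_i\delta\,w_i,
\]
then set $p_{k+1}:=p_k^{(m)}$. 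The straight segment from $p_k^{(i-1)}$ to $p_k^{(i)}$ is an initial subsegment of $[p_k^{(i-1)},w_i]$ and hence a valid $W$-directed segment. Concatenating these $mN$ segments for cycles $k=0,1,\dots,N-1$ yields a $W$-directed path $\gamma$ starting at $q$.

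Key estimate and conclusion: Expanding the composition of the affine steps and using $\sum_i \lambda_i = 1$ and $\sum_i \lambda_i w_i = q'$ gives
\[
p_{k+1} \;=\; (1-\delta)\,p_k \;+\; \delta\,q' \;+\; E_k,
\]
where $\|E_k\| = O\bigl(\delta^2\,\operatorname{diam}(K)\bigr)$, the quadratic error coming from the fact that the $i$-th step is anchored at $p_k^{(i-1)}$ rather than at $p_k$. Iterating shows that $p_k - q'$ equals $(1-\delta)^k(q-q')$ up to an $O(\delta)$ perturbation, so $p_k$ stays within $O(\delta)$ of the segment $[q,q']$, converges geometrically to $q'$, and lies in $B_\epsilon(q')$ after $N = O\bigl(\log(1/\epsilon)/\delta\bigr)$ cycles. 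Within any single cycle, the intermediate points $p_k^{(i)}$ differ from $p_k$ by at most $\delta\,\operatorname{diam}(K)$, so they too lie within $O(\delta)$ of $[q,q']$. Choosing $\delta$ small enough that every intermediate point lies in $N_\epsilon\bigl([q,q']\bigr)$, and stopping at the first cycle whose endpoint lies in $B_\epsilon(q')$, produces the required path.

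The main obstacle is making the error analysis precise enough to extract simultaneously the geometric convergence of $p_k$ to $q'$ and the uniform $O(\delta)$ closeness of every intermediate point to $[q,q']$; this requires tracking how the vectors $w_i - p_k^{(i-1)}$ drift from $w_i - p_k$ across a single cycle, and showing that the accumulated drift is absorbed by the $O(\delta^2)$ term. A minor issue worth flagging is the possibility that $q$ coincides with some vertex $w_i$, in which case the first attempted step would be degenerate and violate the injectivity requirement; this is resolved by reordering the indices so the cycle begins at an $i$ for which $p_k^{(i-1)}\neq w_i$, or equivalently by prepending one non-degenerate directed segment before the main construction.
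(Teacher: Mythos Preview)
Your argument is correct and constitutes a genuinely different proof from the paper's.

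\textbf{Comparison.} The paper proceeds by induction on the dimension of the face $K'$: it singles out one vertex $v$ of $K'$, uses the inductive hypothesis to approximate the segment from $q$ to the point $q''$ where the ray $[v,q']$ meets the opposite face $K''$, then fires a single directed segment back toward $v$ to cross the line $[q,q']$, and iterates this zigzag until it lands near $q'$. Your approach avoids induction entirely: you cycle through \emph{all} vertices of $K'$ with step sizes weighted by the barycentric coordinates of $q'$, and observe that one full cycle is, up to an $O(\delta^2)$ error, the Euler step $p_{k+1}=(1-\delta)p_k+\delta q'$ for the flow toward $q'$. Summing the geometric series controls the accumulated error by $O(\delta)$, which simultaneously gives closeness to the segment and proximity of the endpoint to $q'$. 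Your method is more elementary and yields explicit quantitative bounds; the paper's has a more pictorial flavor and uses only two ``directions'' at a time, but needs the inductive machinery and a somewhat delicate termination argument.

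\textbf{Two minor remarks.} First, the phrase ``converges geometrically to $q'$'' is slightly misleading: the iterates $p_k$ only come within $C\delta$ of $q'$, not arbitrarily close, so the order of quantifiers matters (choose $\delta$ first so that $C\delta<\epsilon/2$, then choose $N$). You do recover the correct statement in the next clause, so this is purely expository. Second, your worry about the degenerate case $p_k^{(i-1)}=w_i$ is unnecessary here: the paper's definition of $W$-directed path explicitly allows individual pieces to be constant, so a degenerate step is permitted without any reordering.
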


\begin{proof}The proof will be by induction on the dimension of $K'$.  If the dimension is zero then $K'$ is a vertex, and so the geodesic from $q$ to $q'$ is already $W$-directed.

Let the dimension $d$ of $K'$ be at least 1, and choose a vertex $v$.  The other vertices span a $(d-1)$--simplex $K''$.  Extend the geodesic segment $[v,q']$ to $K''$, and denote by $q''$ the point where this segment intersects $K''$.  Let $P$ denote the 2-dimensional plane containing $q,q'$, and $v$.  Let $\pi_P \colon K \rightarrow P$ denote orthogonal projection to $P$.

Suppose $i$ is an integer with $i>1$.  Since the dimension of $K''$ is $d-1$, the inductive hypothesis implies that there is a $W$-directed path $\beta$ from $q$, ending within $\frac{\epsilon}{2^i}$ of $q''$, staying within $\frac{\epsilon}{2^i}$ of the segment $[q,q'']$.  If $d(q'', q') \leq \frac{\epsilon}{2}$ then by setting $\gamma = \beta$ we are done, otherwise let $x$ be the point on $[q,q'']$ which is of distance $\frac{\epsilon}{2}$ from $[q,q']$.  Let $a \in \mathbb{R}$ be such that $d(\beta(a), x) < \frac{\epsilon}{2^i}$.

Let $\beta' = \beta \mid_{[0,a]}$.  Note that $\beta'$ is still $W$-directed.

Since $q'$ lies in the interior of $K'$, we have that $v$ and $q''$ lie on opposite sides of $[q, q']$ in $P$.  By the triangle inequality, this means that $\pi_P(\beta'(a))$ lies on the same side of $[q,q']$ as $q''$.  Thus, the segment $[\beta'(a), v]$, when projected to $P$, crosses the segment $[q,q']$.  Append to $\beta'$ the $v$-directed segment that terminates at the point $y$ which projects to this intersection.

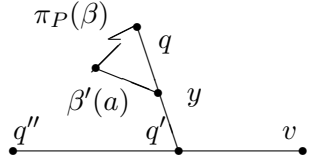
\begin{figure}[h]

\begin{center}

\setlength{\unitlength}{.55cm}
\begin{picture}(7,3)
\put(0,0){\circle*{0.2}}
\put(0,.3){$q''$}
\put(4,0){\circle*{0.2}}
\put(7,0){\circle*{0.2}}
\put(3.2,.3){$q'$}
\put(6.5,0.3){$v$}
\put(0,0){\line(1,0){7}}
\put(3,3){\circle*{.2}}
\put(3.5,2.5){$q$}
\put(2.3,2.7){\line(2,1){.7}}
\put(2.3,2.7){\line(1,0){.3}}
\put(1.9,1.9){\line(1,1){.7}}
\put(.5,3.1){$\pi_P(\beta)$}

\put(2,2){\circle*{.2}}
\put(1.3,1){$\beta'(a)$}
\put(2,2){\line(5,-2){1.5}}
\put(3.5,1.4){\circle*{.2}}
\put(3,3){\line(1,-3){1}}
\put(4.2,1.2){$y$}

\end{picture}
\end{center}

\caption{The construction projected to the plane $P$.}

\end{figure}

The segment $[\beta'(a),y]$ starts at most $\frac{\epsilon}{2^i}$ from the plane $P$ and travels towards a point on $P$, and thus ends within $\frac{\epsilon}{2^i}$ of $P$.  In particular $d(y,[q,q']) < \frac{\epsilon}{2^i}$.  Thus if $d(y,[v,q'']) < \frac{\epsilon}{2}$, we are done.  Otherwise, this segment has a length of at least $\frac{\epsilon}{4}$, since its projection starts at least of distance $\frac{\epsilon}{4}$ from $[q,q']$ and ends at distance 0.

Consider the line containing $q'$ and $v$ to be the $x$-axis in $P$.  Then the line segment $\pi_P\left([\beta(a),y]\right)$ starts at least distance $\frac{\epsilon}{2}$ from the $x$-axis, and it moves towards a point on the $x$-axis no more than distance $\mathrm{diam}(K) = \sqrt{2}$ away.  If we choose our orientation so that the $x$ values of the segment are increasing, we then have that its slope is less than $-\frac{\epsilon}{2\sqrt{2}}$.  Since it is of length at least $\frac{\epsilon}{4}$, this gives a bound on $d\left(\pi_P(\beta'(a)),[v,q'']\right) - d\left(\pi_P(y),[v,q'']\right)$ which is independent of $i$.

Since $\beta$ stayed within $\frac{\epsilon}{2^i}$ of the geodesic $[q,q'']$, if $i$ is sufficiently large we have $d\left(\pi_P(\beta(a)),[v,q'']\right) < d(q,[v,q''])$.  Thus, assuming we choose $i$ sufficiently large, we have that $d(q,[v,q'']) - d(y,[v,q''])$ is bounded away from zero.

We can now continually repeat this process, replacing $q$ by $y$ and increasing $i$ sufficiently each time.  We concatenate the results to obtain a $W$-directed path $\gamma$ starting from $q$.  Since at each stage we move no farther than $\frac{\epsilon}{2^i}$ from the geodesic to $q'$ at the previous stage, we stay within $\epsilon$ of $[q,q']$ at every point in this process.  Further, since each step ends closer to $[v,q'']$ by an amount bounded away from zero, the process eventually terminates within $\frac{\epsilon}{2}$ of $[v,q'']$, and thus within $\epsilon$ of $q'$.
\end{proof}

In particular, setting $K' = K$ in the above Lemma allows us to approximate any line segment by a $W$-directed path.

\begin{proof}[Proof of Lemma \ref{L:directApprox}] Lemma \ref{L:lineApprox} implies the existence of a $W$-directed path from any $x \in B_\epsilon(\gamma(0))$ which ends in $B_\frac{\epsilon}{2}(\gamma(0))$ and stays within $\epsilon - d\bigl(\gamma(0),x)$ of the segment $[x,\gamma(0)]$.  By beginning with such a path, we can assume that our starting point lies in $B_\frac{\epsilon}{2}(\gamma(0))$.

Approximate $\gamma$ by a piecewise-linear path $\beta$ such that $d_H(\gamma,\beta) < \frac{\epsilon}{2}$.  By Lemma \ref{L:lineApprox} we can find $W$-directed paths for each linear segment, each starting where the previous ended, ending within $\frac{\epsilon}{2}$ of the next linear segment, and staying within $\frac{\epsilon}{2}$ of the segments connecting their starting and ending points.  Thus each stays within $\epsilon$ of the corresponding linear segment of $\beta$.

Concatenating these $W$-directed paths gives a single $W$-directed path $\gamma'$ with the desired properties.
\end{proof}

We will need the following proposition for the proof of Lemma \ref{L:closeWalk}:

\begin{proposition}\label{P:projectSeg}
Let $x \in E$ be any nonzero vector with nonnegative coordinates, and let $e_i$ be any elementary basis vector.  Let $u = x + e_i$.  Then the segment $[x,u]$ projects to a $W$-directed segment on $M$, and $d(\projM(x), \projM(u)) \leq \sin^{-1}\left(\frac{1}{\lVert x \rVert}\right)$.
\end{proposition}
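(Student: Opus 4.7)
The plan is to verify the two assertions separately, both of which reduce to plane Euclidean geometry once one restricts attention to the $2$-dimensional subspace $P = \mathrm{span}(x, e_i)$.

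First I would check that the projected segment is $W$-directed. Every point on $[x,u]$ has the form $x + te_i$ for $t \in [0,1]$, lies in $P$, and has nonnegative coordinates (since $x$ does). Its image under $\projM$ is $(x + te_i)/\lVert x + te_i \rVert$, which lies on the great semicircle $M \cap P$. This semicircle is a geodesic of $M$ that passes through both $\projM(x)$ and $\projM(e_i)$. As $t$ increases from $0$ to $1$, the unit vector $(x + te_i)/\lVert x + te_i \rVert$ moves monotonically along this semicircle from $\projM(x)$ toward $\projM(e_i)$, and only a bounded initial portion of the full geodesic $[\projM(x),\projM(e_i)]$ is traversed (the limit as $t \to \infty$ would be $\projM(e_i)$ itself). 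Since $\projM(e_i) \in W$, this exhibits the projection as an initial subsegment of a $W$-directed geodesic, with injectivity holding provided $x$ is not a positive multiple of $e_i$ (the only degenerate case, in which the projection is a point).

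For the angular bound, I would apply the law of sines in the Euclidean triangle $0,x,u$. The angle $\theta = d(\projM(x), \projM(u))$ is exactly the angle of this triangle at the vertex $0$, by the definition of the angle metric on $M$. The side opposite this angle is $[x,u]$, with length $\lVert e_i \rVert = 1$; denoting the angle at $u$ by $\alpha$, the law of sines gives
\[
\frac{\sin\theta}{1} = \frac{\sin\alpha}{\lVert x \rVert},
\]
so $\sin\theta \leq 1/\lVert x \rVert$, and therefore $\theta \leq \sin^{-1}\!\left(1/\lVert x \rVert\right)$ (which is the content of the claim in the regime $\lVert x \rVert \geq 1$ where the bound is nontrivial).

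There is essentially no serious obstacle here; both assertions are routine planar geometry after the reduction to $P$. The only point requiring a moment of care is the injectivity clause from the definition of a $W$-directed segment, which fails only when $x$ is parallel to $e_i$—a degenerate configuration that will not arise when this proposition is invoked in the proof of Lemma \ref{L:closeWalk}.
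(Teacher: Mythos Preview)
Your proof is correct and essentially matches the paper's: both reduce to the plane spanned by $x$ and $e_i$, observe that the projection lies on the geodesic toward the vertex $\projM(e_i)\in W$, and bound the angle at the origin by elementary triangle geometry. Your explicit law-of-sines computation is precisely what justifies the paper's remark that the angle is maximal when $u\perp e_i$, and you are if anything more careful in flagging the degenerate case $x\parallel e_i$.
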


\begin{proof}
The segment $[x,u]$ consists of vectors of the form $x + te_i$, for real values $t$.  All such vectors lie in the plane spanned by $x$ and $e_i$.  The projection of this plane to $M$ is precisely the geodesic containing $\projM(x)$ and $\projM(e_i)$.  But $\projM(e_i) \in W$ is a vertex $w_i$ of the simplex $L$.  It is clear that $d(\projM(x + te_i), w) \geq d(\projM(x + t'e_i),w)$ for $0 \leq t \leq t' \leq 1$.  Thus the segment $[x,u]$ projects to a subsegment of $[\projM(x),w]$, which is a $W$-directed segment.

Since $v$ and $e_i$ have fixed lengths, $\projM([x,u])$ has maximum length if $u$ is orthogonal to $e_i$.  In this case, its length is the angle between $x$ and $u$, which is $\sin^{-1}\left(\frac{\lVert e_i \rVert}{\lVert x \rVert}\right)$.  Since $\lVert e_i \rVert = 1$, this gives that $d(\projM(x), \projM(u)) \leq \sin^{-1}\left( \frac{1}{ \lVert x \rVert } \right)$.
\end{proof}

\begin{proposition}\label{P:closeVert}
Given any vector $v$ and basis vector $e_i$,
\[
\lim_{k\rightarrow \infty} d\bigl(\projM(v + ke_i),\projM(e_i)\bigr) = 0.
\]
\end{proposition}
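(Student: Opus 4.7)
The plan is to reduce this to a one-step Euclidean angle computation. Recall from the beginning of Section~\ref{S:Vwalk} that for nonzero vectors $x, u \in E$, the spherical distance $d(\projM(x), \projM(u))$ is by definition the angle $\angle(x, u)$ measured at the origin. Thus it suffices to show that the angle $\theta_k$ between $v + k e_i$ and $e_i$ tends to $0$ as $k \to \infty$.

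To this end, I would decompose $v = c\,e_i + w$, where $c \in \mathbb{R}$ is the component of $v$ along $e_i$ and $w \perp e_i$. Then $v + k e_i = (k + c) e_i + w$. For $k$ large enough that $k + c > 0$, this expresses $v + k e_i$ as its $e_i$-component $(k+c)e_i$ plus its orthogonal component $w$, so
\[
\tan \theta_k = \frac{\|w\|}{k + c}.
\]
Since $\|w\|$ is a fixed constant and $k + c \to \infty$, we get $\tan \theta_k \to 0$ and hence $\theta_k \to 0$, giving the proposition.

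There is essentially no obstacle here; the argument is elementary. The only minor points of care are ensuring that $v + k e_i$ lies in $E$ and is nonzero so that $\projM(v + k e_i)$ is defined (true for all sufficiently large $k$, since its $e_0$-coordinate is either $v_0$ or $v_0 + k$, which is nonnegative for large $k$ in either case, provided $v$ is chosen so that the projection makes sense), and invoking the identification of the angle metric on the half-sphere $M$ with the Euclidean angle at the origin that was recorded alongside the definition of $\projM$ at the start of Section~\ref{S:Vwalk}.
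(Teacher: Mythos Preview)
Your proof is correct and follows essentially the same approach as the paper: both identify $d(\projM(v+ke_i),\projM(e_i))$ with the Euclidean angle at the origin and show it tends to zero by an elementary estimate. The only cosmetic difference is that the paper reuses the bound from Proposition~\ref{P:projectSeg} to get $\theta_k \leq \sin^{-1}\bigl(\lVert v\rVert / \lVert k e_i\rVert\bigr)$, whereas you decompose $v$ orthogonally and compute $\tan\theta_k$ exactly.
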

\begin{proof}
By the same argument as above, the angle between $ke_i$ and $ke_i + v$ is no more than $\sin^{-1}\left(\frac{\lVert v \rVert}{\lVert ke_i \rVert}\right)$.  Since $\projM(ke_i) = \projM(e_i)$ and $\lVert ke_i \rVert \rightarrow \infty$ the proposition follows.
\end{proof}

In particular, Propositions \ref{P:projectSeg} and \ref{P:closeVert} show that we can add $k$ copies of $e_i$ to any vector $v$, and if $k$ is large enough then $\projM(ke_i+v)$ gets arbitrarily close to the vertex $\projM(e_i)$ of $L$.  Additionally, the linear path from $\projM(v)$ to $\projM(ke_i+v)$ projects to a $\projM(e_i)$-directed segment.

\begin{lemma} \label{L:getsClose}  Suppose $a, a'$, and $w$ are colinear in the half-sphere $M$, and $a'$ is on the geodesic segment from $a$ to $w$.  Further suppose $b \in M$ with $d(b, w) < d(a', w)$, and all distances between these points are $\leq \frac{\pi}{2}$.  Then $d(b,a') < d(b,a)$.
\end{lemma}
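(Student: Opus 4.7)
The plan is to apply the spherical law of cosines in the two triangles $\triangle baw$ and $\triangle ba'w$. Because $a$, $a'$, $w$ are collinear with $a'$ on the geodesic segment from $a$ to $w$, the geodesic ray from $w$ through $a'$ continues on to $a$; hence the angle at $w$ is the same in both triangles. Call this common angle $\gamma$, and set $r = d(b,w)$, $r_1 = d(a,w)$, $r_2 = d(a',w)$. The hypotheses immediately give $0 \le r < r_2 < r_1 \le \pi/2$. Since $r_2 < r_1 \le \pi/2$, in fact $r_2 < \pi/2$ strictly, and likewise $r < \pi/2$ and $\frac{r_1+r_2}{2} < \pi/2$ strictly.

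Next I would write out $\cos d(b,a)$ and $\cos d(b,a')$ via the spherical law of cosines (with common angle $\gamma$) and subtract. Applying the sum-to-product identities to $\cos r_2 - \cos r_1$ and $\sin r_1 - \sin r_2$ factors out a positive $2\sin\frac{r_1-r_2}{2}$ and reduces the problem to proving
\[
\sin\tfrac{r_1+r_2}{2}\,\cos r \;>\; \cos\tfrac{r_1+r_2}{2}\,\sin r \,\cos\gamma.
\]

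The key step is the observation that $r < r_2 < \frac{r_1+r_2}{2}$, with both quantities lying in the open interval $[0,\pi/2)$ where $\tan$ is strictly increasing and cosines are strictly positive. Therefore $\tan r < \tan\frac{r_1+r_2}{2}$, equivalently $\sin\frac{r_1+r_2}{2}\cos r > \cos\frac{r_1+r_2}{2}\sin r$. Since $\cos\gamma \le 1$ and $\cos\frac{r_1+r_2}{2}\sin r \ge 0$, the displayed inequality follows. Hence $\cos d(b,a') > \cos d(b,a)$, and monotonicity of cosine on $[0,\pi]$ gives $d(b,a') < d(b,a)$.

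I do not anticipate any real obstacle beyond bookkeeping: the only delicate point is verifying that the relevant angles are strictly less than $\pi/2$ so that both $\tan$ monotonicity and positivity of the various cosines can be invoked, and these facts follow from combining the strict inequality $r < r_2 < r_1$ with the hypothesis that all distances are at most $\pi/2$.
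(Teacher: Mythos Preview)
Your proof is correct and follows essentially the same route as the paper's: apply the spherical law of cosines in the two triangles sharing the angle at $w$, subtract, use sum-to-product to factor out $2\sin\frac{r_1-r_2}{2}>0$, and then invoke $r<\frac{r_1+r_2}{2}<\frac{\pi}{2}$ together with $\cos\gamma\le 1$ to get the sign of the remaining factor. Your bookkeeping of the strict inequalities (in particular that $\frac{r_1+r_2}{2}<\frac{\pi}{2}$ because $r_2<r_1\le\frac{\pi}{2}$) is if anything a bit more careful than the paper's.
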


\begin{figure}[h]
\begin{center}
\includegraphics[width = 2.5in]{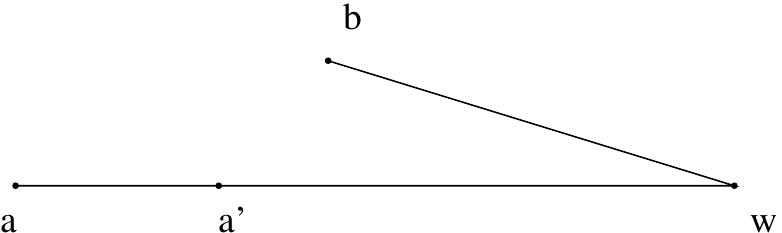}
\end{center}
\caption{}
\end{figure}

\begin{proof}
This is an exercise in spherical geometry.  For this proof, we will use the convention $\overline{ab}$ to denote $d(a,b)$.  Since all distances here are $\leq \frac{\pi}{2}$, we have that $\sin$ and $\tan$ are strictly increasing functions, while $\cos$ is a strictly decreasing function.  Let $C$ be the angle (on the sphere) formed by the geodesic segments $[b,w]$ and $[a,w]$.  Since $d(a,w) > d(a',w) > d(b,w)$ and $\cos(C) < 1$, we have

\[
\tan\left(\frac{\overline{aw}+\overline{a'w}}{2}\right) > \tan(\overline{bw})\cos(C) \]\[
2\tan\left(\frac{\overline{aw}+\overline{a'w}}{2}\right) > 2\tan(\overline{bw})\cos(C)\]\[
2\cos(\overline{bw})\sin\left(\frac{\overline{aw}+\overline{a'w}}{2}\right) > 2\sin(\overline{bw})\cos\left(\frac{\overline{aw}+\overline{a'w}}{2}\right)\cos(C)\]

We multiply both sides by $\sin\left(\frac{\overline{a'w}-\overline{aw}}{2}\right)$, reversing the inequality since this value is negative, and apply angle difference formulas to obtain

\[
\cos(\overline{bw})\left(\cos(\overline{aw}) - \cos(\overline{a'w})\right) < \sin(\overline{bw})\left(\sin(\overline{a'w}) - \sin(\overline{aw})\right)\cos(C)\]\[
\cos(\overline{bw})\cos(\overline{aw})+\sin(\overline{bw})\sin(\overline{aw})\cos(C) < \cos(\overline{bw})\cos(\overline{a'w})+\sin(\overline{bw})\sin(\overline{a'w})\cos(C)\]

Applying the spherical law of cosines to the triangles with vertices $ABW$ and $A'BW$, we end up with

\[
\cos(\overline{ab}) < \cos(\overline{a'b})\]\[
\overline{ab} > \overline{a'b}\]

\end{proof}

Let $v = \projM(e_n)$.  Then $v$ is a vertex of $L$.  Call the span of the other $n-1$ vertices $L'$.  Define $\Psi\colon L - \{v\} \rightarrow L'$ such that $\Psi(q)$ is the point of $L'$ obtained by extending the geodesic segment $[v, q]$ to $L'$.

\begin{proposition}\label{P:projProps}
\begin{enumerate}
\item $\Psi$ is induced by orthogonal projection of $E$ to the subspace spanned by $\{e_1,...,e_{n-1}\}$.

\item Up to reparameterization, $\Psi$ sends geodesic segments to geodesic segments.

\item $\Psi$ sends $W$-directed paths to $\left(W - \{v\}\right)$-directed paths.

\item If $d(v, q) = d(v,q')$, then $d(q,q') \leq d\left(\Psi(q),\Psi(q')\right)$.
\end{enumerate}
\end{proposition}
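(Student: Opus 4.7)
The plan is to dispatch the four items in order, since (2) and (3) follow quickly from the concrete description obtained in (1), and (4) is an independent spherical-geometry computation.

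For (1), I would represent a point $q \in L - \{v\}$ as $\projM(\hat q)$ for a unit vector $\hat q \in E$ with nonnegative coordinates. The geodesic in $M$ through $v = \projM(e_n)$ and $q$ is the trace on $M$ of the $2$-plane $\mathrm{span}(e_n,\hat q)$, so extending it to $L'$ (which is the intersection of $M$ with the hyperplane $\{x_n = 0\}$ restricted to the nonnegative orthant) picks out the unit vector in that $2$-plane orthogonal to $e_n$. This vector is proportional to $\hat q - (\hat q \cdot e_n)\, e_n$, i.e.\ the orthogonal projection of $\hat q$ onto $\mathrm{span}(e_1,\dots,e_{n-1})$. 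Nonnegativity of the coordinates of $\hat q$ keeps the image inside $L'$, establishing (1).

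For (2), a geodesic in $M$ is the trace on $M$ of a $2$-plane $P$ through the origin. Orthogonal projection sends $P$ to a line (when $e_n \in P$) or to another $2$-plane in $\mathrm{span}(e_1,\dots,e_{n-1})$; radially projecting back to $M$ gives a point or a geodesic, so $\Psi$ takes geodesic segments to geodesic segments up to reparameterization. For (3), I would analyze a $W$-directed segment $\gamma|_{[k-1,k]}$ by the target vertex $w \in W$. If $w = v$, every point of the underlying geodesic projects to the same point of $L'$ (this is the defining property of $\Psi$), so the image is constant. If $w \neq v$, then $w \in L'$ and $\Psi(w) = w$, so by (2) the image is an initial subsegment of $[\Psi(\gamma(k-1)),w]$ and hence a $(W-\{v\})$-directed segment.

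For (4), the tool is the spherical law of cosines. Let $\alpha = d(v,q) = d(v,q')$ and let $\theta$ be the angle at $v$ in the spherical triangle with vertices $v,q,q'$. The geodesic rays from $v$ through $q$ (resp.\ $q'$) are the same as those through $\Psi(q)$ (resp.\ $\Psi(q')$), so the same $\theta$ appears at $v$ in the triangle $v,\Psi(q),\Psi(q')$. Since $L'$ sits at distance $\pi/2$ from $v$ (being the projection of the hyperplane orthogonal to $e_n$), the law of cosines yields
\[ \cos d(q,q') = \cos^2\alpha + \sin^2\alpha\cos\theta, \qquad \cos d(\Psi(q),\Psi(q')) = \cos\theta, \]
and the inequality collapses to $\cos^2\alpha(1-\cos\theta) \geq 0$.

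The most delicate point to get right is keeping (1) sharp enough to serve its two downstream roles: justifying the collapse of $v$-directed segments in (3), and identifying the angle $\theta$ as genuinely the same in the two triangles in (4). Once (1) is formulated as ``$\Psi$ is the quotient by the geodesic flow out of $v$'', both (3) and (4) become immediate, and the only real computation is the spherical-trigonometric one above.
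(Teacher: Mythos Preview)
Your proposal is correct and follows essentially the same approach as the paper for all four items. The only minor difference is in (4): you compute explicitly with the spherical law of cosines, whereas the paper simply invokes the fact that on a sphere, unit-speed geodesic rays $\alpha(t),\alpha'(t)$ emanating from a common basepoint satisfy $d(\alpha(t),\alpha'(t)) < d(\alpha(t'),\alpha'(t'))$ for $0 \le t < t' \le \pi/2$, applied at $t = d(v,q)$ and $t' = \pi/2$.
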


\begin{proof}  Note that an injective path in $L$ is a geodesic segment (up to reparameterization) if and only if it lies in $\overline{P} \cap L$ for some 2-plane $P \subset E$ which passes through the origin.  Let $q \in L - \{v\}$, let $x$ be any vector such that $\projM(x) = q$, and let $x'$ be such that $\projM(x') = \Psi(q)$.  Since $v$, $q$, and $\Psi(q)$ lie on the same geodesic, this means that $x$ must lie in the plane spanned by $e_n$ and $x'$.  But since $\projM(x') \in L'$, $x'$ lies in the span of $e_1,...,e_{n-1}$, so $x'$ and $e_n$ are orthogonal.

Since $x = ae_n + bx'$ for scalars $a$ and $b$, and $\Psi(\projM(x)) = \projM(x')$, we have that $\Psi$ is induced by orthogonal projection to the subspace spanned by $e_1,...,e_{n-1}$, proving (2).  This projection is a linear map, thus it sends line segments to line segments, and since geodesic segments on $L$ are projections of line segments in $E$, we have proved (1).

For $q \in L$ and $w$ a vertex, $\Psi$ sends the geodesic segment $[q,w]$ to the geodesic segment $[\Psi(q),\Psi(w)]$.  If $w \neq v$, then $w \in L'$ and so $\Psi(w) = w$.  Thus $\Psi$ sends the segment $[q,w]$ to $[\Psi(q),w]$, and sends initial subsegments of the former to initial subsegments of the latter.  If $w = v$, a subsegment of $[q,w]$ is sent to a single point, which proves (3).

By (1), we have that for all $q, q' \in L$, $d(\Psi(q),v) = d(\Psi(q'),v) = \frac{\pi}{2}$.  Geodesic rays $\alpha(t), \alpha'(t)$ emitting from a common basepoint in a sphere have the property that $d\left(\alpha(t),\alpha'(t)\right) < d\left(\alpha(t'),\alpha'(t')\right)$ for $0 \leq t < t' \leq \frac{\pi}{2}$.  This proves (4).

\end{proof}

We require one final lemma:

\begin{lemma}\label{L:inPlace}
For any $\epsilon > 0$, there is an $R>0$ such that, if $x$ is a vector with positive coordinates and $\lVert x \rVert > R$, there is a walk $c\colon [0,\infty)\rightarrow E$ over $V$, starting at $x$, such that $\projM(c) \subset B_\epsilon(x)$.
\end{lemma}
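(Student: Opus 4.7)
The plan is to construct $c$ so that its trajectory stays close to the radial ray $L(s) = x + s\hat{x}$, where $\hat{x} = x/\|x\|_1$. Since $L(s) = (\|x\|_1 + s)\hat{x}$ is a positive multiple of $x$, one has $\projM(L(s)) = \projM(x)$ for every $s \geq 0$. If the deviation $\|c(t) - L(t)\|_2$ stays within a tube whose width is small compared to $\|L(t)\|_2$, then the angular distance between $\projM(c(t))$ and $\projM(x)$ will stay below $\epsilon$ uniformly in $t$. The hypothesis $\|x\|_2 > R$ then serves only to control the initial part of the walk, where this ratio is largest.

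The preparatory step is a uniform rational approximation, valid for all $\hat{x}$ simultaneously. I would fix an integer $N$ depending only on $n$ and $\epsilon$ (to be sized below), and for the given $\hat{x}$ in the simplex $\Delta^{n-1} = \{v \in \mathbb{R}^n_{\geq 0} : \|v\|_1 = 1\}$ set $p_i = \lfloor N\hat{x}_i \rfloor$. Distributing the deficit $N - \sum p_i \in \{0, 1, \ldots, n-1\}$ by incrementing some of the $p_i$ by $1$ yields nonnegative integers with $\sum p_i = N$ and $|p_i - N\hat{x}_i| \leq 1$ for every $i$. Writing $y = \sum p_i e_i$, the bound $\|y - N\hat{x}\|_2 \leq \sqrt{n}$ then holds independently of $\hat{x}$.

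I would then define $c$ to be the cyclic walk over $V$ starting at $x$ which, in each cycle, adds $e_1$ a total of $p_1$ times, then $e_2$ a total of $p_2$ times, and so on through $e_n$; each cycle has length $N$ and advances the walk by $y$. For integer $t = mN + r$ with $0 \leq r < N$, one has $c(t) = x + my + B(r)$, where $B(r)$ has nonnegative integer coordinates summing to $r$, so $\|B(r)\|_2 \leq N$. The identity
\[
c(t) - L(t) = m(y - N\hat{x}) + B(r) - r\hat{x}
\]
gives $\|c(t) - L(t)\|_2 \leq m\sqrt{n} + 2N$, while $\|L(t)\|_2 = \|x\|_2 + t\|\hat{x}\|_2 \geq \|x\|_2 + mN/\sqrt{n}$ using the bound $\|\hat{x}\|_2 \geq 1/\sqrt{n}$ on $\Delta^{n-1}$. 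Since the angle $\theta$ at the origin between nonzero vectors $u$ and $v$ satisfies $\sin\theta \leq \|v-u\|_2/\|u\|_2$, one obtains
\[
d\bigl(\projM(c(t)),\projM(x)\bigr) \leq \arcsin\!\left(\frac{m\sqrt{n}+2N}{\|x\|_2+mN/\sqrt{n}}\right) \leq \arcsin\!\left(\frac{n}{N}+\frac{2N}{\|x\|_2}\right).
\]

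To finish, I would pick $N$ so $n/N < \sin(\epsilon/3)$, then pick $R$ large enough that $2N/R < \sin(\epsilon/3)$ and also $1/R < \sin(\epsilon/3)$. The first two control the integer-$t$ angular deviation; the third absorbs the at-most $\arcsin(1/\|c(\lfloor t\rfloor)\|_2) \leq \arcsin(1/R)$ additional deviation that occurs within a single unit interval of the walk, guaranteed by Proposition \ref{P:projectSeg}. The main obstacle is the universality of $N$: the rational approximation of $\hat{x}$ must use a denominator bounded independently of $x$, or else the $2N/\|x\|_2$ error term could not be tamed by a single threshold $R$. The explicit rounding bound $|p_i - N\hat{x}_i| \leq 1$ delivers this uniformity and is what makes a choice $R = R(\epsilon,n)$ possible.
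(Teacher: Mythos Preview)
Your argument is correct in substance and takes a genuinely different route from the paper. The paper proceeds by induction on the dimension of $L$: it projects out one coordinate via the map $\Psi$ of Proposition~\ref{P:projProps}, invokes the inductive hypothesis to get a walk in $\{e_1,\dots,e_{n-1}\}$ whose $\Psi$-projection stays near $\Psi(\projM(x))$, and then adaptively inserts copies of $e_n$ to keep the distance to the vertex $\projM(e_n)$ near its initial value. Your approach is entirely non-inductive and explicit: you approximate the direction $\hat x$ by a rational vector $y/N$ with uniformly bounded denominator and run a periodic walk with period $y$, bounding the angular drift by an elementary ratio estimate. This is shorter and more transparent; the paper's approach, on the other hand, reuses the structural machinery ($\Psi$, Lemma~\ref{L:getsClose}) already set up for the other lemmas, so it fits the surrounding narrative more organically.

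One cosmetic point: your final choice of constants does not quite close. With $n/N<\sin(\epsilon/3)$ and $2N/R<\sin(\epsilon/3)$ the integer-time bound is $\arcsin\bigl(2\sin(\epsilon/3)\bigr)$, which exceeds $2\epsilon/3$ since $\sin$ is strictly concave; adding the extra $\epsilon/3$ then overshoots $\epsilon$. This is trivially repaired by replacing $\sin(\epsilon/3)$ with, say, $\tfrac{1}{3}\sin\epsilon$ throughout, or simply by observing that $n/N+2N/R$ can be made smaller than any prescribed $\delta>0$ by first choosing $N$ and then $R$.
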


\begin{proof} We will proceed by induction on the dimension of $L$.  If $L$ is 1-dimensional, then $B_\epsilon(\projM(x))$ is an interval.  If $\lVert v \rVert > \sin^{-1}\left(\frac{2}{\epsilon}\right)$ then Proposition \ref{P:projectSeg} guarantees that these segments all have length $< \frac{\epsilon}{2}$, thus we can always choose an $e_i$ such that $\projM([v,v+e_i]) \subset B_\epsilon(\projM(x))$.  Thus, setting $R = \sin^{-1}\left(\frac{2}{\epsilon}\right)$ gives the result.

For higher dimensions, set $v = e_n$, and let $L'$ and $\Psi$ be as in Proposition \ref{P:projProps}.  Let $\rho \colon E \rightarrow E$ be orthogonal projection to the subspace spanned by $e_1,...,e_{n-1}$.  By \ref{P:projProps}(1), for every vector $x$, $\Psi(\projM(x)) = \projM(\rho(x))$.  Note that $\lVert x \rVert \geq \lVert \rho(x) \rVert$.

By the inductive hypothesis, there is an $R$ such that if $\lVert \rho(x) \rVert > R$, then there is a walk in $\{e_1,...,e_{n-1}\}$, starting from $\rho(x)$, whose projection to $L'$ lies within $\frac{\epsilon}{3}$ of $\Psi(\projM(x))$.  If we use the same sequence of vectors starting from $x$ instead of $\rho(x)$ we obtain a walk $c$. By Proposition \ref{P:projProps} (1) and (4), we have that $\Psi(\projM(c)) \subset B_\frac{\epsilon}{3}\bigl(\Psi(\projM(x))\bigr)$.

Note that since $c$ is a walk in $e_1,...,e_{n-1}$, we have that for all $t$, $d\bigl(\projM(c(t)),v\bigr) \geq d\bigl(\projM(x),v\bigr)$.  Assume $R$ is chosen large enough so that projections of segments $[v,v+e_i]$ have length no more than $\frac{\epsilon}{3}$.  We will create another walk $c'$ by inserting copies of $e_n$ into the sequence of vectors for $c$.

Let $a$ be the smallest integer such that $d\bigl(\projM(c(a)),v\bigr) - d\bigl(\projM(x),v\bigr) > \frac{\epsilon}{3}$.  We insert copies of $e_n$ at the $a^{th}$ position in the sequence for $c$.  We insert the minimal number $m$ of such vectors so that $d\bigl(\projM(c(a)+me_n)),v\bigr) \leq d\bigl(\projM(x),v\bigr)$.  Such an $m$ exists by Proposition \ref{P:closeVert}.  After inserting the copies of $e_n$, our sequence continues with the sequence of vectors from $c$.

This creates a new sequence of vectors, and we continue in this way, finding the minimal integer $a$ such that $d\bigl(\projM(c(a)),v\bigr) - d\bigl(\projM(x),v\bigr) > \frac{\epsilon}{3}$, and inserting $e_n$'s so that $d\bigl(\projM(c(a+m)),v\bigr) \leq d\bigl(\projM(x),v\bigr)$.  Continuing this process to infinity creates a new walk $c'$ with the property that, for all $t$, $\lvert d\bigl(\projM(c'(t)),v\bigr) - d\bigl(\projM(x),v\bigr)\rvert < \frac{2\epsilon}{3}$.

For any $t$, let $q_t$ be the point on the geodesic $[v,\Psi(\projM(x))]$ such that $d(q_t,v) = d\bigl(\projM(c'(t)),v\bigr)$. Note that since we have only inserted $e_n$'s, we have that $\Psi(\projM(c)) = \Psi(\projM(c'))$.  By Proposition \ref{P:projProps} (4) this implies that $d\bigl(\projM(c'(t)),q_t\bigr) < \frac{\epsilon}{3}$.  Since $d\bigl(\projM(x),q_t\bigr) < \frac{2\epsilon}{3}$, the triangle inequality implies that $\projM(c'(t))$ is always within $\epsilon$ of $x$.

This construction works as long as $\lVert \rho(x) \rVert > R$.  But the construction using the inductive hypothesis works via orthogonal projection to the span of any $(n-1)$ of our basis vectors.  Thus, by replacing $R$ with $2R$, we can guarantee that any $x$ with $\lVert x \rVert > R$ has some sufficiently large projection to allow the above construction to work.  This completes the proof.
\end{proof}

\begin{proof}[Proof of Lemma \ref{L:closeWalk}] Let $N$ be the number of directed segments in $\gamma$.  Let $w_i = \projM(e_i)$, so $W = \{w_1,...,w_n\}$.  Let $\delta = \frac{\epsilon_1}{4N}$.  Let $R_1 = \csc(\delta)$, let $R_2$ to be the value obtained from Lemma \ref{L:inPlace} which allows the projection of a walk to stay within $\frac{\epsilon_2}{2}$ of its starting point, and let $R = \max(R_1,R_2)$.

To construct our walk $c$, let $\gamma_1, \gamma_2,...$ be the $W$-directed segments of $\gamma$.  Let $w_{i_k}$ be the vertex that $\gamma_k$ moves towards, and let $T_k$ be the terminal point of $\gamma_k$.  We will construct our sequence of vectors $c$ by starting with the empty sequence and appending copies of $e_{i_k}$ to the sequence for each $\gamma_k$.  Denote by $\Sigma_k$ the sum of all vectors appended up to the $k^{th}$ step of this process, together with $x$.  To choose how many $e_{i_k}$'s we append at the $k^{th}$ step, we consider 3 cases:

\begin{enumerate}

\item If $T_k = w_{i_k}$, we append enough copies of $e_{i_k}$ so that $d\bigl(\projM(\Sigma_k),w_{i_k}\bigr) < \frac{\epsilon_1}{2}$.

\item If $d(T_k, w_{i_k}) > d\bigl(\projM(\Sigma_{k-1}),w_{i_k}\bigr)$, we append no copies of $e_{i_k}$.

\item Otherwise, we append the minimal number of $e_{i_k}$ required so that $d(T_k, w_{i_k}) > d\bigl(\projM(\Sigma_k), w_{i_k}\bigr).$

\end{enumerate}

Cases (1) and (3) are possible by Proposition \ref{P:closeVert}.

We claim that in each of the above cases, $$d\bigl(\projM(\Sigma_{k}), T_{k}\bigr) \leq \max\left(\frac{\epsilon_1}{2}, d\bigl(\projM(\Sigma_{k-1}),T_{k-1}\bigr) + \delta\right).$$

In case 1, the claim is clear since $T_k = w_{i_k}$.  In case 2, the claim holds by Lemma \ref{L:getsClose}.

In case 3, let $A_k \in [T_{k-1},w_{i_k}]$ and $B_k \in [\projM(\Sigma_{k-1}),w_{i_k}]$ be such that $d(A_k, w_{i_k}) = d(B_k,w_{i_k}) = \min\bigl(d(T_{k-1},w_{i_k}), d(\projM(\Sigma_{k-1}),w_{i_k})\bigr)$.  Let $P_k$ be the point on $[\projM(\Sigma_{k-1}),w_{i_k}]$ such that $d(P_k,w_{i_k}) = d(T_k,w_{i_k})$.  Then we have

\[
d\bigl(\projM(\Sigma_{k-1}),T_{k-1}\bigr) \geq d(A_k,B_k)\]\[
d(A_k,B_k) \geq d(T_k, P_k)\]

The first inequality holds by Lemma \ref{L:getsClose}.  The second holds because geodesics $\alpha(t)$ and $\alpha'(t)$ emitting from a common basepoint have increasing distance in $t$ for $t < \frac{\pi}{2}$, and $[w_{i_k},A_k]$ and $[w_{i_k},B_k]$ are such segments.

Since we added the minimal number of $e_{i_k}$'s to make $\projM(\Sigma_k)$ closer to $w_{i_k}$ than $T_k$ is, we have that $P_k \in \projM\left([\Sigma_k - e_{i_k},\Sigma_k]\right)$.  But since $\lVert \Sigma_k - e_{i_k} \rVert \geq R = \csc(\delta)$, by Proposition \ref{P:projectSeg} the length of this segment is no more than $\sin^{-1}\left(\frac{1}{\csc(\delta)}\right) = \delta$.  Thus by the triangle inequality we have

\[
d(T_k,P_k) \geq d(\projM(\Sigma_k),T_k) - \delta
\]

and the claim follows.

Thus, for each $k$, $\projM(\Sigma_k)$ gets no more than $\delta$ further from $T_k$.  But since $\delta \leq \frac{\epsilon_1}{4N}$, with $N$ the number of segments in $\gamma$, we have that $d(\projM(\Sigma_N),T_N) < \frac{3\epsilon_1}{4}$, since $d(\projM(\Sigma_0),T_0) < \frac{\epsilon_1}{2}$.  Thus, it only remains to show that we can append additional vectors causing the walk to end within $\epsilon_2$ of $T_N$, and to show that we can continue the walk arbitrarily far without leaving $B_{\epsilon_1}(T_N)$.

By Lemma \ref{L:lineApprox}, we can find a $W$-directed path $\beta$ in $L$ which starts at $\projM(\Sigma_N)$, ends within $\epsilon_2$ of $T_N$, and stays within $\epsilon_2$ of the geodesic segment $[\Sigma_N, T_N]$.  Without loss of generality we may assume $\epsilon_2 < \frac{\epsilon_1}{4}$.  Let $N'$ be the number of segments in $\beta$, let $\delta' = \frac{\epsilon_2}{4N'}$, and let $R' = \csc(\delta')$.

By Lemma \ref{L:inPlace}, we can append vectors to our sequence so that the projections stay within $\frac{\epsilon_2}{2}$ of $\projM(\Sigma_N)$, until we reach a total sum $\Sigma$ with $\lVert \Sigma \rVert > R'$.  We can then repeat the above construction to append a walk whose projection ends within $\epsilon_2$ of $T_N$ and stays within $\epsilon_2$ of $\beta$ and thus within $\epsilon_1$ of $T_N$.  This gives us our walk which has all the desired properties.
\end{proof}

\section{Acknowledgments}\label{S:Ack}
The author would like to thank Kim Ruane for suggesting the question addressed in this paper, Nan Li for assistance with Lemma \ref{L:getsClose}, and Steve Ferry for numerous helpful comments and advice.

\end{document}